\documentclass{article}

\usepackage{fancyhdr}
\usepackage{amsfonts}
\usepackage{amsmath}
\usepackage{amsrefs}
\usepackage{amssymb}
\usepackage{indentfirst}
\usepackage{mathrsfs}

\DeclareMathOperator{\Hilb}{Hilb}
\DeclareMathOperator{\reg}{reg}
\DeclareMathOperator{\initial}{in}
\DeclareMathOperator{\rank}{rank}
\DeclareMathOperator{\gp}{gp}
\DeclareMathOperator{\codim}{codim}

\usepackage{amsthm}
\theoremstyle{definition}
\newtheorem{definition}{Definition}[section]

\newtheorem{remark}[definition]{Remark}
\newtheorem{example}[definition]{Example}

\theoremstyle{plain}
\newtheorem{proposition}[definition]{Proposition}
\newtheorem{theorem}[definition]{Theorem}
\newtheorem{lemma}[definition]{Lemma}
\newtheorem{corollary}[definition]{Corollary}

\title{\bfseries Generators of the initial ideal of simplicial toric ideals}
\author{Ryotaro Hanyu}
\date{}

\begin{document}
\maketitle

\begin{abstract}
We describe a generating set for the initial ideal of simplicial toric ideals with respect to the graded reverse lexicographic order, using representations of elements of affine monoids as sums of irreducible elements. Although the resulting generating set is not necessarily minimal, we demonstrate, through an example, how one can obtain the reduced Gr\"obner basis from it. Moreover, we compare the maximal degree of the Gr\"obner basis and the Castelnuvo-Mumford regularity.  
\end{abstract}

\noindent{\small\textbf{Keywords:} Affine semigroup rings, Gr\"obner bases, Castelnuovo-Mumford regularity}\\
\noindent{\small\textbf{MSC:} 13F65, 13P10, 13D02}

\section{\large Intoroduction}
Let $B$ be an \textit{affine monoid}, that is, $B$ is finitely generated and isomorphic to a submonoid of $\mathbb{Z}^d$ for some $d\geq0$. Let $\mathbb{K}$ be a field and $\mathbb{K}[B]$ a $\mathbb{K}$-vector space with basis $\lbrace t^b\mid b\in B\rbrace$. $\mathbb{K}[B]$ is also a $\mathbb{K}$-algebra with multiplication defined by $t^a\cdot t^b=t^{a+b}$ for $a, b\in B$. $\mathbb{K}[B]$ is called an \textit{affine semigroup ring}. 

An element $b\in B$ is a \textit{unit} if $-b\in B$. An element $b\in B$ is \textit{irreducible} if $b$ is not a unit and that $b=b_1+b_2$ with $b_1, b_2\in B$ implies $b_1$ or $b_2$ is a unit. Every element of $B$ can be expressed as a finite sum of a unit and irreducible elements, and up to differences by units there exist only finitely many irreducible elements in $B$ \cite[Proposition 2.14]{BrGu}. $B$ is called \textit{positive} if $0$ is the only unit in $B$. A positive affine monoid $B$ has only finitely many irreducible elements. Let $H$ be the set of all the irreducible elements in $B$. The finite set $H$ is the unique minimal system of generators of $B$, in other words, every system of generators of $B$ contains $H$. The elements of $H$ are called the \textit{Hilbert basis} of $B$, and $H$ is denoted by $\Hilb(B)$. A \textit{positive grading} on $B$ is defined to be a monoid homomorphism $\gamma : B\to\mathbb{Z}_{\geq0}$ such that $\gamma(b)=0$ implies $b=0$.

\begin{definition}
An affine monoid $B$ is called \textit{homogeneous} if $B$ is positive and admits  a positive grading $\gamma$ such that $\gamma(a)=1$ for every $a\in\Hilb(B)$.
\end{definition}

A positive grading $\gamma$ induces the $\mathbb{Z}$-grading on $\mathbb{K}[B]$ such that $\deg t^b=\gamma(b)$ for $b\in B$ and $\deg x=0$ for any $x\in\mathbb{K}$. Let $\Hilb(B)=\lbrace a_1,\ldots, a_n\rbrace$ be the Hilbert basis of a homogeneous affine monoid $B$. We will consider the $\mathbb{K}[x_1,\ldots, x_n]$-module structure of $\mathbb{K}[B]$ induced by a degree-preserving surjective homomorphism
\[\pi : S=\mathbb{K}[x_1,\ldots, x_n]\twoheadrightarrow\mathbb{K}[B],\]
defined by $\pi(x_i)=t^{a_i}$, where all the variables $x_i$ of $S$ have degree $1$. Then, $\ker \pi$ is a homogeneous prime ideal, and $\mathbb{K}[B]\simeq S/\ker \pi$. The kernel of $\pi$ is called a \textit{toric ideal}. In the following, we will assume that an affine monoid $B$ is homogeneous.

Let $I$ be a homogeneous ideal of $\mathbb{K}[x_1,\ldots, x_n]$. To estimate the complexity of computing Gr\"{o}bner bases, we study upper bounds on the degrees of elements in the Gr\"{o}bner basis. If $I$ is generated by homogeneous polynomials of degree at most $d$ and $\mathbb{K}$ is of characteristic zero, then there is an upper bound $(2d)^{(2n+2)^{n+1}}$ for any Gr\"{o}bner basis of $I$ \cite{MoMo}. Such an upper bound is called ``doubly exponential'' (see \cite{HaSe} for more examples of doubly exponential bounds).

We also consider the \textit{Castelnuovo-Mumford regularity} of $I$, denoted by $\reg I$, to provide upper bounds on the degrees of elements in the Gr\"{o}bner basis of $I$. Bayer and Stillman proved that if the choice of coordinates is generic for $I$, then the degrees of the elements in any minimal Gr\"{o}bner basis of $I$ with respect to the graded reverse lexicographic order $\prec$ are at most $\reg I$. In other words, the \textit{initial ideal} $\initial_{\prec}(I)$ of $I$ with respect to $\prec$ is generated in degrees at most $\reg I$ \cite[Corollary 2.5]{BaSt}. We now turn to the case where $I$ is a toric ideal. In particular, we are interested in whether a similar bound holds when $I$ is \textit{simplicial}. This property is known to hold when $S/I$ is generalized Cohen-Macaulay: in this case, $\initial_{\prec}(I)$ is generated in degrees at most $\reg I$ \cite[Lemma 2.1]{HHS}.

In Section 2, we recall some basic definitions and results on simplicial affine semigroup rings that will be used throughout the paper. 

In Section 3, for each element $b$ of a simplicial affine monoid $B$, we define a monomial $m_b$ and use these monomials to describe a generating set for the initial ideal of simplicial toric ideals. The following result provides an explicit description of the generators.

\begin{theorem}
Let $\ker\pi\subset S$ be a simplicial toric ideal. 
\[
\mathscr{M}_{\initial_{\prec}(\ker\pi)}=\bigcup_{n\in\mathscr{N}_1\cup\mathscr{N}_2}\mathscr{M}_{nS}.
\]
Thus, there exist $n_1,\ldots, n_r\in\mathscr{N}_1\cup\mathscr{N}_2$ such that $\initial_{\prec}(\ker\pi)=(n_1,\ldots, n_r)$. 
\end{theorem}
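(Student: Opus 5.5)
The plan is to reduce the statement to a description of the standard monomials of $\ker\pi$, and then to check that $\mathscr{N}_1\cup\mathscr{N}_2$ catches every non-standard monomial. Since $\ker\pi$ is a toric ideal, it is spanned as a $\mathbb{K}$-vector space by binomials $x^u-x^v$ with $\pi(x^u)=\pi(x^v)$. Hence a monomial $x^u$ lies in $\initial_{\prec}(\ker\pi)$ if and only if there is another monomial $x^v\prec x^u$ in the same fiber, i.e.\ with $\sum u_ia_i=\sum v_ia_i$. Writing $b=\sum u_i a_i$, I take $m_b$ to be the $\prec$-smallest monomial of the fiber of $b$; I assume this matches the paper's definition of $m_b$ in Section 3. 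Then the first step gives
\[
\mathscr{M}_{\initial_{\prec}(\ker\pi)}=\{x^u : x^u\neq m_{b(u)}\},
\]
where $b(u)=\sum u_ia_i$. This is the standard fact that the standard monomials are exactly the fiber minima.

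The inclusion $\supseteq$ in the theorem should then be the easy half. By construction, each element $n$ of $\mathscr{N}_1$ and of $\mathscr{N}_2$ is a monomial that is not the fiber minimum: it is of the form $x_i m_b$ or $m_b\cdot(\text{extremal variable power})$ and differs from $m_{b'}$ for its own degree $b'$. So $n\in\initial_{\prec}(\ker\pi)$, and therefore $\mathscr{M}_{nS}\subseteq\mathscr{M}_{\initial_{\prec}(\ker\pi)}$.

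For $\subseteq$, I take a non-standard monomial $x^u$ and choose a minimal divisor $x^w\mid x^u$ that is still non-standard. Every proper divisor of $x^w$ is then standard. If $x_i$ is any variable dividing $x^w$, this gives $x^w=x_i\,m_c$ with $c=b(w)-a_i$ and $x_i m_c\neq m_{c+a_i}$. I then split into cases according to the variable $x_i$. If $x_i$ is not one of the variables attached to the extremal rays of the simplicial cone, I expect $x^w$ to land in $\mathscr{N}_1$. If every variable of $x^w$ that one could split off corresponds to an extremal ray, I use the simplicial structure to land in $\mathscr{N}_2$. Concretely, write $c$ as an element of the fundamental parallelotope plus a combination of the extremal generators. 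The last variables in the reverse lexicographic order are compared first, which forces the factorization of $m_c$ to carry the extremal part as a pure power.

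The final assertion, that finitely many $n_1,\dots,n_r$ suffice, follows from Dickson's lemma (equivalently, Noetherianity of $S$) applied to the monomial ideal generated by $\mathscr{N}_1\cup\mathscr{N}_2$.

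The main obstacle is the $\mathscr{N}_2$ case of the $\subseteq$ direction. There I must show that the grevlex comparison interacts with the decomposition $B\subseteq\bigcup_{p}\bigl(p+\sum\mathbb{Z}_{\ge0}e_j\bigr)$, where $p$ runs over the finitely many points of the parallelotope. The goal is to show that $m_{c+a_i}$ is obtained from $m_c$ by a controlled move. I would first prove a lemma that $m_{b+e_j}=x_j m_b$ for the extremal generators $e_j$ placed last in the order. I would then use it to reduce any minimal non-standard monomial to one of the explicit shapes defining $\mathscr{N}_1$ or $\mathscr{N}_2$.
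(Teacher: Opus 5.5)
The reduction to fiber minima (the paper's Proposition 3.2) is sound. So is the idea of examining a minimal non-standard monomial $x^w$. The inclusion $\supseteq$ is essentially right, though for $\mathscr{N}_2$ it is not ``by construction'': you must check, as in Remark 3.4, that $m_{b_i}m_{(b_i\vee b_j-b_i)}\prec n(b_i,b_j)$. That is where $b_i\prec b_j$ in the order on the class $\Gamma$ is used.

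\textbf{The gap is the case split for $\subseteq$.}
\begin{itemize}
\item \emph{Your first case misses the mixed generators.} Every element of $\mathscr{N}_1$ is $x_im_b$ with $b\in B_A$, and such an $m_b$ contains no $y_j$. So $\mathscr{N}_1$ consists of monomials in $x_1,\ldots,x_c$ only. Splitting a non-extremal variable off $x^w$ gives $x^w=x_im_c$, but not $c\in B_A$ once $x^w$ involves some $y_j$. For instance, $x_1^2x_2y_1$ in Subsection 3.2 is a minimal generator of $\initial_{\prec}(\ker\pi)$ and is divisible by $x_1$. It is not in $\mathscr{N}_1$; it is $n(b_1,b_2)$ for $\Gamma_8$.
\item \emph{Your second case never occurs.} A pure $y$-monomial $y^{\nu}$ is always a fiber minimum. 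Any other monomial $x^{\mu}y^{\nu'}$ in its fiber has $\mu\neq0$ and $\alpha(\nu-\nu')=\sum_i\mu_{[i]}a_i\geq0$ componentwise, whence $y^{\nu}\prec x^{\mu}y^{\nu'}$.
\end{itemize}
So the mixed generators, which are exactly what $\mathscr{N}_2$ is for, are never handled.

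The tools you sketch for them also fail:
\begin{itemize}
\item The lemma $m_{b+e_j}=y_jm_b$ (paper's notation) holds only for the last variable $y_d$. In Subsection 3.2, $x_1^2y_1$ is standard, but $y_1\cdot x_1^2y_1$ is the leading term of $x_1^2y_1^2-x_2x_3^2y_3$.
\item The parallelotope point of $c$ does not determine the $x$-part of $m_c$ when a class $\Gamma\subset B_A$ has several elements. For example, $(2,2,8)$ and $(6,2,8)$ both lie in $\Gamma_8+A$, but their minimal monomials have $x$-parts $x_1^2x_2$ and $x_3^2$. This is precisely the situation in which $\mathscr{N}_2\neq\emptyset$.
\end{itemize}

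\textbf{What is missing} is the split by the $x$-part, combined with the order on $\Gamma$ and the join.
\begin{itemize}
\item[(i)] A standard monomial $x^{\mu}$ in $x_1,\ldots,x_c$ alone equals $m_b$ with $b\in B_A$. Otherwise $b-e_j\in B$ for some $j$, and $m_{b-e_j}y_j$ is a $\prec$-smaller monomial in the same fiber; this is the argument in the proof of Proposition 3.7. Hence if $x^w$ involves no $y_j$, then for any $x_i\mid x^w$ the standard divisor $x^w/x_i$ equals $m_c$ with $c\in B_A$. Since $x^w\neq m_{c+a_i}$, this gives $x^w\in\mathscr{N}_1$.
\item[(ii)] Suppose $x^w=x^{\mu}y^{\nu}$ with $\nu\neq0$. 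Then $x^{\mu}$ is a proper divisor of $x^w$, hence standard, so $x^{\mu}=m_b$ with $b\in B_A$. By (i), the fiber minimum is $m_{b'}y^{\nu'}$ with $b'$ in the same class $\Gamma$ and $\nu'\neq\nu$. Because $b'-b=\alpha(\nu-\nu')$, the grevlex inequality $m_{b'}y^{\nu'}\prec m_by^{\nu}$ says exactly $b'\prec b$. Then $b+\alpha\nu\in(b'+A)\cap(b+A)=b'\vee b+A$, so $n(b',b)=m_b\,m_{(b'\vee b-b)}$ divides $x^w$.
\end{itemize}
Step (ii) is the content of Lemma 3.5 and Proposition 3.6, and it is the step your proposal does not supply.
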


$\mathscr{M}_S$, $\mathscr{M}_{\initial_{\prec}(\ker\pi)}$ and $\mathscr{M}_{nS}$ denote the sets of monomials in $S$, $\initial_{\prec}(\ker\pi)$ and $nS$, respectively. In the statement of the theorem, $\mathscr{N}_1$ and $\mathscr{N}_2$ denote finite sets of monomials, whose precise descriptions are given in Section 3. Moreover, we demonstrate, through an example, how one can obtain the reduced Gr\"obner basis of $\ker\pi$ from $\mathscr{N}_1\cup\mathscr{N}_2$. 

In Section 4, we show that under certain assumptions any minimal Gr\"{o}bner basis of $\ker\pi$ consists of elements of degree at most $r(\mathbb{K}[B])+1$. Here, $r(\mathbb{K}[B])$ denotes the \textit{reduction number} of $\mathbb{K}[B]$, and in general $r(\mathbb{K}[B])+1\leq\reg(\ker\pi)$.

\begin{theorem}
Let $\mathbb{K}[B]\simeq S/\ker\pi$ be a simplicial affine semigroup ring. If, for every ideal in the decomposition of $\mathbb{K}[B]$, either it is the unit ideal or it is generated by monomials of degree $1$, then $\mathscr{N}_1\cup\mathscr{N}_2$ consists of monomials of degree at most $r(\mathbb{K}[B])+1$. In particular, $\initial_{\prec}(\ker\pi)$ is generated in degrees at most $r(\mathbb{K}[B])+1$. 
\end{theorem}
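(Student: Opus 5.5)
We have to guess the definitions of $\mathscr{N}_1,\mathscr{N}_2$ and the decomposition. Standard setup: $B$ is simplicial with extremal rays $a_1,\dots,a_d$, the first $d$ Hilbert basis elements, and $A$ is the submonoid they generate. Then $\mathbb{K}[B]=\bigoplus_{g\in G} I_g t^{b_g}$ as a $\mathbb{K}[A]$-module, where $G=\gp(B)/\gp(A)$, $b_g$ is the minimal element of $B$ in its class $g$, and each $I_g\subseteq \mathbb{K}[A]$ is a monomial ideal. The reduction number $r(\mathbb{K}[B])$ is the maximal degree of a minimal generator $t^{b}$ of $\mathbb{K}[B]$ over $\mathbb{K}[A]$, i.e.\ the maximal $\gamma(b)$ over elements $b\in B$ with $b-a_i\notin B$ for all $i\le d$.

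For $b\in B$, I take $m_b$ to be the $\prec$-smallest monomial of $S$ mapping to $t^b$, i.e.\ the standard monomial of degree $\gamma(b)$. I expect $\mathscr{N}_1$ to consist of the products $x_j m_b$ with $j>d$ that fail to be standard, where $b$ runs over the minimal $\mathbb{K}[A]$-module generators. I expect $\mathscr{N}_2$ to consist of the products $x_i m_b$ with $i\le d$ that fail to be standard, together with the pure $x_{d+1},\dots,x_n$-monomials in the initial ideal that come from the torsion structure.

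The plan is to bound the degree of each element of $\mathscr{N}_1\cup\mathscr{N}_2$ separately. Every generator has the form $x_k m_b$ or $x_k m_c$, so it suffices to show that the relevant $b$ or $c$ has $\gamma\le r(\mathbb{K}[B])$. The argument has three steps.

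First, for the $\mathscr{N}_1$-type monomials $x_j m_b$, where $b$ is a minimal module generator, we have $\gamma(b)\le r$ by the definition of the reduction number. Hence $\deg(x_jm_b)\le r+1$ with no further work.

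Second, for the $\mathscr{N}_2$-type monomials $x_i m_c$ with $i\le d$, this is where the hypothesis enters. Write $c=b_g+a$ with $t^{a}\in I_g$.
\begin{itemize}
\item If $I_g$ is the unit ideal, then every element of the class $g$ is $b_g$ plus an element of $A$. Since grevlex makes the variables $x_{d+1},\dots,x_n$ (those involving non-extremal generators) largest, the standard monomial factors as $m_c=m_{b_g}\cdot(\text{monomial in }x_1,\dots,x_d)$. Multiplication by $x_i$ then preserves standardness, so no generator arises from this class beyond those already counted in $\mathscr{N}_1$.
\item If $I_g$ is generated by degree-one monomials $t^{a_{i_1}},\dots,t^{a_{i_s}}$, then the module generators in class $g$ are $b_g+a_{i_\ell}$. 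These have degree $\gamma(b_g)+1\le r$, since they are themselves minimal generators. A non-standard product $x_i m_c$ is then minimal only when $c$ is such a generator, or when $c=b_g$ and $x_i t^{b_g}$ lands in $I_g t^{b_g}$. In both cases the degree is at most $r+1$.
\end{itemize}

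Third, I would verify that any element of $\mathscr{N}_2$ not of the above shapes is divisible by one that is. This would reduce the general element to the two cases just treated.

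The main obstacle is the second step. It requires controlling the $\prec$-minimal representative $m_c$ for non-generator $c$, and showing that leading monomials are not pushed into higher degree when $I_g$ is a proper ideal. The danger is that $I_g$ might have generators in degree greater than $1$, so the hypothesis is used precisely to exclude this. I would first prove a lemma that $m_{c+a_i}=x_i m_c$ whenever $c+a_i$ is not a minimal generator in its class, using that $x_1,\dots,x_d$ are the smallest variables in grevlex. All degree bounds would then follow from this lemma. The statement that $\initial_\prec(\ker\pi)$ is generated in degrees at most $r(\mathbb{K}[B])+1$ follows immediately from Theorem 1.2.
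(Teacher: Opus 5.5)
Your case split is the paper's: $\mathscr{N}_1$ is bounded directly by the definition of $r(\mathbb{K}[B])$, and $\mathscr{N}_2$ is handled class by class, according to whether $\tilde I=T$ or $\tilde I$ is generated in degree $1$. The mechanism you propose for the $\mathscr{N}_2$ part, however, fails.

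In the paper's notation, the extremal generators are $e_k$ with variables $y_k$; these are the last, hence $\prec$-smallest, variables. Your key lemma then says $m_{c+e_k}=y_km_c$ whenever $c+e_k\notin B_A$. Since $(c+e_k)-e_k=c\in B$, the element $c+e_k$ is never in $B_A$. So the lemma asserts that multiplying any standard monomial by an extremal variable stays standard, i.e.\ that no minimal generator of $\initial_{\prec}(\ker\pi)$ involves a $y_k$.

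This already fails under the hypothesis of the theorem. Take a class $\Gamma=\{h+e_k,h+e_l\}$ with $k<l$, and put $c=h+e_l$. Then $c+e_k=(h+e_k)+e_l$, and $m_{h+e_k}y_l\prec m_{h+e_l}y_k$, because the last variable in which they differ is $y_l$. Hence $m_{c+e_k}\neq y_km_c$. Concretely, in Example 4.8 the class $\{(2,4,0),(2,1,3)\}$ gives $x_1x_6y_2-x_2^2y_3\in\ker\pi$. Its initial term $y_2\,m_{(2,1,3)}=x_1x_6y_2$ is a minimal generator of the initial ideal.

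There are further problems:
\begin{itemize}
\item The shift in the decomposition is the componentwise minimum $h\in\gp(B)$, which lies in $B$ only when $\tilde I=T$. So your subcase ``$c=b_g$'' is empty.
\item The claim that a minimal non-standard $y_km_c$ must have $c\in B_A$ is asserted, not proved.
\item Your third step is likewise only announced.
\end{itemize}

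The missing ingredient is the explicit form of $\mathscr{N}_2$, which turns the bound into a one-line computation. $\mathscr{N}_2$ is the finite set of monomials $n(b_i,b_j)=m_{b_j}\,m_{(b_i\vee b_j-b_j)}$ with $b_i\prec b_j$ in a class $\Gamma\subset B_A$. Since $\{y^{(b-h)/\alpha}\mid b\in\Gamma\}$ is a minimal generating set of $\tilde I$, the case $\tilde I=T$ forces $\#\Gamma=1$, so $\mathscr{N}_\Gamma=\emptyset$. In the degree-one case, $b_i=h+e_k$ and $b_j=h+e_l$ with $k<l$. Hence $b_i\vee b_j-b_j=e_k$, and $n(b_i,b_j)=m_{b_j}y_k$ has degree $\deg b_j+1\le r(\mathbb{K}[B])+1$.

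Note also that the theorem bounds every element of this particular finite set. With your guessed $\mathscr{N}_2$ (all non-standard products $y_km_c$ with $c\in B$), degrees are unbounded, e.g.\ $y_k\,m_{h+e_l}\,y_l^N$. So your route could at best give the ``in particular'' clause.
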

The decomposition of $\mathbb{K}[B]$ in the statement can be computed using the Macaulay2 package MonomialAlgebras\cite{MA} (see Subsection 2.3, for details). It is noted that the assumption in the statement is satisfied when $\mathbb{K}[B]$ is Buchsbaum (in particular, Cohen-Macaulay) \cite[Proposition 3.1]{BEN}.

\section{\large Background}
In Subsection 2.1, we provide some definitions and preliminary results that will be needed only for the proof of the characterization in Subsection 2.2. 
In Subsection 2.2, we give a well-known characterization of simplicial affine semigroup rings.
In Subsection 2.3, we describe a decomposition of simplicial affine semigroup rings due to Hoa and St\"{u}ckrad\cite{HoSt}.

In Section 2, we do not assume that an affine monoid $B$ is homogeneous. Throughout the paper, the $i$-th entry of $x$ is denoted by $x_{[i]}$ for $x\in\mathbb{R}^n$.

\subsection{Generation of  pointed cones}
Let $X\subset\mathbb{R}^d$ be a subset of $\mathbb{R}^d$. A \textit{cone} generated by $X$ is defined by
\[C(X):=\Bigl\{\sum_{i=1}^kr_ix_i\mid k\geq1,\;r_i\in\mathbb{R}_{\geq0},\;x_i\in X\Bigr\}\subset\mathbb{R}^d.\]
Let $(\mathbb{R}^d)^*$ be the dual vector space. For a finite subset $X\subset\mathbb{R}^d$, we define the \textit{dual cone} of $C(X)$ by
\[C(X)^*:=\lbrace\lambda\in(\mathbb{R}^d)^*\mid\lambda(x)\geq0\quad\text{for all $x\in C(X)$}\rbrace.\]
By Fourier-Motzkin elimination (see \cite[Proposition 2.9.1]{Koc}), we can construct finitely many linear forms $\lambda_1,\ldots, \lambda_t\in(\mathbb{R}^d)^*$ such that 
\[C(X)^*=\mathbb{R}_{\geq0}\lambda_1+\cdots+\mathbb{R}_{\geq0}\lambda_t=C(\lbrace\lambda_1,\ldots, \lambda_t\rbrace).\]
If $C(X)$ is generated by elements in $\mathbb{Q}^d$, then from the construction of $\lambda_i$, we can take each $\lambda_i$ from $(\mathbb{Q}^d)^*$. By the identification $\mathbb{R}^d=(\mathbb{R}^d)^{**}$,
\[C(X)^{**}=H_{\lambda_1}^+\cap\cdots\cap H_{\lambda_t}^+,\]
where $H_{\lambda_i}^+:=\lbrace x\in\mathbb{R}^d\mid\lambda_i(x)\geq0\rbrace$ . By construction of $\lambda_i$, it follows that the intersection of $H_{\lambda_i}^+$ equals to $C(X)$, then $C(X)=C(X)^{**}$.

If $B$ is an affine monoid, then $C(B)$ is a cone generated by finitely many elements of $B\subset\mathbb{Z}^d$ for some $d\geq0$.
\begin{definition}
$C(B)$ is called \textit{pointed} if $x, -x\in C(B)$ implies $x=0$. 
\end{definition}

Since $C(B)^*$ is also a finitely generated cone, we consider an irredundant representation of $C(B)^*$, 
\[C(B)^*=H_{x_1}^+\cap\cdots\cap H_{x_t}^+,\quad x_i\in(\mathbb{Q}^d)^{**}=\mathbb{Q}^d.\]
If $C(B)$ is pointed, the irredundant representation of $C(B)^*$ is uniquely determined \cite[Theorem 1.6, Proposition 1.19]{BrGu}. By duality, 
\[C(B)=\mathbb{R}_{\geq0}x_1+\cdots\mathbb{R}_{\geq0}x_t,\]
and $\lbrace x_1,\ldots, x_t\rbrace$ is, up to positive scalar multiplication, the unique minimal system of generators of $C(B)$ (cf. \cite[Proposition 1.20]{BrGu}).


If an affine monoid $B$ is positive, then $C(B)$ is pointed \cite[Lemma 3.4.3]{Koc}. For a positive affine monoid $B$, multiplying each generator by a positive scalar if necessary, we can take minimal generators of $C(B)$ from $\Hilb(B)$.
\begin{lemma}
Let $B$ be a positive affine monoid, and $x_1,\ldots, x_t\in\mathbb{Q}^d$ be minimal generators of $C(B)$ obtained as above. Then, for each $x_i$, there exists $m_i\in\mathbb{R}_{\geq0}$ such that $m_ix_i\in\Hilb(B)$.
\end{lemma}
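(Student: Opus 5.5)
The idea is to show that each extreme ray $\mathbb{R}_{\geq0}x_i$ of $C(B)$ contains an element of $\Hilb(B)$. Then a positive multiple $m_ix_i$ lands in $\Hilb(B)$.

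\emph{Step 1: the ray meets $B$.} Since $B$ is finitely generated, $C(B)=C(\Hilb(B))$, because every element of $B$ is a sum of irreducible elements ($B$ is positive, so there are no nontrivial units). Write $\Hilb(B)=\{a_1,\ldots,a_n\}$ and $x_i=\sum_j r_ja_j$ with $r_j\geq0$. Because $x_i$ spans an extreme ray, I take a linear form $\lambda\in C(B)^*$ with $\lambda(x_i)=0$ that vanishes on $C(B)$ exactly along $\mathbb{R}_{\geq0}x_i$. For the pointed cone $C(B)$, this comes from the irredundant representation $C(B)^*=C(\{\lambda_1,\ldots,\lambda_t\})$ and the duality $C(B)=C(B)^{**}$ recalled above. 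Applying $\lambda$ gives $0=\sum_j r_j\lambda(a_j)$ with every $\lambda(a_j)\geq0$. Hence each $a_j$ with $r_j>0$ satisfies $\lambda(a_j)=0$, so it lies in the face $\mathbb{R}_{\geq0}x_i$. At least one $r_j>0$, since $x_i\neq0$. Thus some $a_j\in\Hilb(B)$ equals $m_ix_i$ with $m_i>0$.

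\emph{Step 2: choose $m_i$.} Set $m_i$ to be that scalar. It is nonnegative, and in fact positive. This gives $m_ix_i\in\Hilb(B)$.

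The main obstacle is justifying the existence of the supporting form $\lambda$ that cuts out exactly the ray $\mathbb{R}_{\geq0}x_i$. If this is awkward to extract from the paper's setup, I would argue directly from minimality instead. Suppose no Hilbert basis element lies on the ray $\mathbb{R}_{\geq0}x_i$. Write $x_i$ as a nonnegative combination of the $a_j$, and replace each $a_j$ by its expression as a nonnegative combination of the minimal generators $x_1,\ldots,x_t$. This yields $x_i=\sum_k c_kx_k$. If $c_i<1$, then $x_i$ lies in the cone generated by the other $x_k$ (after rearranging; $c_i>1$ would contradict pointedness). Either way, minimality of $\{x_1,\ldots,x_t\}$ is contradicted. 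Handling the case $c_i=1$ carefully, using pointedness to force all other contributions to vanish, is the delicate point. There I would use positivity of $B$ via pointedness of $C(B)$ (\cite[Lemma 3.4.3]{Koc}).
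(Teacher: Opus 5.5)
Your proof is correct, but it runs in the opposite direction from the paper's. The paper fixes one element $b_1\in B$ on the ray, a multiple of $x$ normalized so that its coordinates have $\gcd 1$, and shows directly that $b_1$ is irreducible. If $b_1=u+v$ with $u,v\in B$, expanding $u,v$ in the minimal generators and using minimality and pointedness forces $u,v\in\mathbb{R}_{\geq0}b_1$, and the gcd condition then makes one summand $0$. You instead use $C(B)=C(\Hilb(B))$ and the extremality of the ray: any nonnegative combination of Hilbert basis elements equal to $x_i$ can only involve Hilbert basis elements lying on that ray.

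Your route never has to single out a lattice point on the ray, and that is a genuine advantage. The paper's step ``we may assume $\gcd=1$'' tacitly requires the primitive lattice vector of the ray to lie in $B$. This fails for the monoid $B\subset\mathbb{Z}^2$ generated by $(2,0),(3,0),(0,1)$. There the lemma still holds, since both $(2,0)$ and $(3,0)$ are irreducible elements on the ray, and your argument finds them. The paper's route, when it applies, does name the irreducible element explicitly.

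As for your ``main obstacle'': the exposing form $\lambda$ exists, since faces of polyhedral cones are exposed, but you do not need it. Your fallback already closes, and without any contradiction hypothesis. Write $a_j=\sum_k d_{jk}x_k$ with $d_{jk}\geq0$, so that $c_k=\sum_j r_jd_{jk}$.
\begin{itemize}
\item If $c_i>1$, pointedness is contradicted because $x_i\neq0$.
\item If $c_i<1$, minimality of the generators is contradicted (or, if the other terms vanish, $x_i\neq0$ is).
\item Hence $c_i=1$, so $\sum_{k\neq i}c_kx_k=0$, and pointedness gives $c_k=0$ for all $k\neq i$.
\end{itemize}
It follows that $d_{jk}=0$ for $k\neq i$ whenever $r_j>0$, so every such $a_j$ is a positive multiple of $x_i$. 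This is the same extremality computation the paper performs on $b_1=u+v$, applied to the Hilbert basis rather than to a chosen element.
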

\begin{proof}
Let $x\in\mathbb{Q}^d$ be an element in minimal system of generators of $C(B)$. Since $x\in\mathbb{Q}^d\cap C(B)$, it follows that $x=\sum_{i=1}^kr_ib_i$, for some $r_i\in\mathbb{Q}_{\geq0}$ and $b_i\in B$, by Gaussian elimination. Then, there exists $m\in\mathbb{Z}$ such that $mx\in B$ and $\mathbb{R}_{\geq0}x=\mathbb{R}_{\geq0}mx$. We may assume $\gcd\lbrace mx_{[1]},\ldots, mx_{[d]}\rbrace=1$. Then, we can take a minimal system of generators $b_1,\ldots, b_t$ of $C(B)$ as follows:
\[C(B)=\mathbb{R}_{\geq0}b_1+\cdots+\mathbb{R}_{\geq0}b_t,\]
where $b_i\in B$ and $\gcd\lbrace (b_i)_{[1]},\ldots, (b_i)_{[d]}\rbrace=1$. 

It suffices to show that $b_1$ is irreducible. Assume $b_1=x+y$ with $x, y\in B$. First, we show that $x, y\in\mathbb{R}_{\geq0}b_1$. We can write $x=\sum_{i=1}^tx_ib_i$ and $y=\sum_{i=1}^ty_ib_i$ for some $x_i, y_i\in\mathbb{R}_{\geq0}$. Then, 
\[(1-x_1-y_1)b_1=\sum_{i=2}^t(x_i+y_i)b_i\in B.\]
If $x\notin\mathbb{R}_{\geq0}b_1$, there exists $i\geq2$ such that $x_i>0$ (thus, $x_i+y_i>0$). The minimality of $b_1,\ldots, b_t$ implies that $\sum_{i=2}^t(x_i+y_i)b_i\neq0$. Then, $(1-x_1-y_1)b_1$ is not a unit of $B$ and it follows that $1-x_1-y_1>0$. But,
\[b_1=(1-x_1-y_1)^{-1}\sum_{i=2}^t(x_i+y_i)b_i\in\sum_{i=2}^t\mathbb{Q}_{\geq0}b_i,\]
which contradicts the minimality of $b_1,\ldots, b_t$. Thus, $x\in\mathbb{R}_{\geq0}b_1$. Similarly, $y\in\mathbb{R}_{\geq0}b_1$. We can write $x=x_1b_1$ and $y=y_1b_1$ for some $x_1, y_1\in\mathbb{R}_{\geq0}$. Since $x, y, b_1\in B\subset\mathbb{Z}^d$ and $\gcd\lbrace (b_1)_{[1]},\ldots, (b_1)_{[d]}\rbrace=1$, we have $x_1, y_1\in\mathbb{Z}_{\geq0}$. Since $b_1=(x_1+y_1)b_1$, it follows that $(x_1, y_1)=(1, 0)$ or $(x_1, y_1)=(0, 1)$. Thus, $b_1$ is irreducible, that is, $b_1\in\Hilb(B)$. 
\end{proof}

\begin{definition}
An affine monoid $B$ is called \textit{simplicial} if the cone $C(B)$ is generated by linearly independent elements. 
\end{definition}

\begin{corollary}
Let $B$ be a simplicial positive affine monoid. There exists $d\geq0$ such that $B\subset\mathbb{Z}^d$ and $C(B)$ is generated by linearly independent elements $e_1,\ldots, e_d\in\Hilb(B)$.
\end{corollary}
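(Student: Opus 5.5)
Here is the plan. Since $B$ is simplicial, $C(B)$ is generated by some linearly independent vectors $v_1,\ldots,v_k$. The corollary has two claims: that $k$ can be taken to be the ambient dimension $d$, and that the $v_i$ can be chosen in $\Hilb(B)$.

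For the first claim, I embed $B$ in some $\mathbb{Z}^{d'}$ and note that the linear span of $C(B)$ is the subspace $V=\mathbb{R}B$ of dimension $k$. The group $\gp(B)\subset\mathbb{Z}^{d'}$ is a finitely generated torsion-free abelian group, hence free. Its rank is $\dim_{\mathbb{Q}}\mathbb{Q}B=k$, because the $\mathbb{Q}$-span and the $\mathbb{R}$-span of a set of integer vectors have the same dimension. So $\gp(B)\cong\mathbb{Z}^k$. Transporting $B$ along this isomorphism gives a monoid isomorphic to $B$ inside $\mathbb{Z}^k$ whose real span is all of $\mathbb{R}^k$. The isomorphism extends to a linear isomorphism $V\to\mathbb{R}^k$, so cones map to cones and linear independence is preserved. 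Irreducibility and positivity are intrinsic monoid properties, so $\Hilb(B)$ is carried to the Hilbert basis of the image. After replacing $B$ by this image, I set $d=k$. The cone $C(B)\subset\mathbb{R}^d$ is then full-dimensional and generated by $d$ linearly independent vectors.

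For the second claim, because $B$ is positive, $C(B)$ is pointed by \cite[Lemma 3.4.3]{Koc}. Its minimal system of generators is therefore unique up to positive scalars. A set of $d$ linearly independent vectors generating $C(B)$ is itself a minimal system, since no independent vector lies in the cone of the others. Hence every minimal system has exactly $d$ elements, and each element is a positive multiple of the given independent vectors. The preceding Lemma then yields, for each minimal generator $x_i$, a scalar $m_i$ with $m_ix_i\in\Hilb(B)$. We have $m_i\ne0$ since $0$ is not irreducible, so $m_i>0$. Setting $e_i=m_ix_i$, the $e_i$ remain linearly independent and still generate $C(B)$, and each lies in $\Hilb(B)$.

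The main obstacle is the first claim: ensuring the ambient dimension equals the number of generators. This requires the re-embedding via $\gp(B)\cong\mathbb{Z}^k$ and a check that every notion involved (cone, linear independence, Hilbert basis) is preserved by the isomorphism. The second claim is a direct application of the uniqueness of minimal generators of pointed cones together with the Lemma.
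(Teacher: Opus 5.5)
Your proof is correct and follows essentially the same route as the paper: re-embed $B$ into $\gp(B)\cong\mathbb{Z}^{\rank B}$, show that the number of independent cone generators equals the rank, and rescale those generators into $\Hilb(B)$ via the preceding Lemma. For the middle step you compute $\dim_{\mathbb{R}}\mathbb{R}B=\dim_{\mathbb{Q}}\mathbb{Q}B$ directly, whereas the paper uses $\mathbb{Q}B\subset\sum\mathbb{Q}e_i$; your checks that the linear extension preserves the simplicial cone and that each $m_i>0$ fill in points the paper leaves tacit.
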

\begin{proof}
The group generated by $B$ is denoted by $\gp(B)$. Set 
\[d=\dim_{\mathbb{Q}}\mathbb{Q}\otimes_{\mathbb{Z}}\gp(B).\]
Since $\gp(B)$ is finitely generated and torsion-free, $\gp(B)\simeq\mathbb{Z}^r$ for some $r\in\mathbb{Z}_{\geq0}$. Then, $r=d$ and we have an embedding $B\hookrightarrow\gp(B)\simeq\mathbb{Z}^d$.

Let $x_1,\ldots, x_t\in\mathbb{Q}^d$ be linearly independent elements generating $C(B)$ with $t\leq d$. Since $x_1,\ldots, x_t$ are minimal generators, $m_1x_1,\ldots, m_tx_t\in\Hilb(B)$ for some $m_1,\ldots, m_t\in\mathbb{R}_{\geq0}$. Set $e_i=m_ix_i$. It is noted that $e_1,\ldots, e_t$ are still linearly independent. Since $B\subset C(B)$ and by Gaussian elimination, we have $\mathbb{Q}B\subset\sum_{i=1}^t\mathbb{Q}e_i$, where
\[\mathbb{Q}B:=\Bigl\{\sum_{i=1}^kr_ib_i\mid k\geq1,\;r_i\in\mathbb{Q},\;b_i\in B\Bigr\},\]
which is isomorphic to $\mathbb{Q}\otimes_{\mathbb{Z}}\gp(B)$. We have $d\leq t$, hence $t=d$.
\end{proof}

\begin{remark}
$\dim_{\mathbb{Q}}\mathbb{Q}\otimes_{\mathbb{Z}}\gp(B)$ is called the \textit{rank} of $B$.
\end{remark}

\subsection{Simplicial affine semigroup rings}
An affine semigroup ring $\mathbb{K}[B]$ is called \textit{simplicial} if $B$ is simplicial. There is the following characterization of simplicial affine semigroup rings.

\begin{proposition}
Let $B$ be an affine monoid and $d=\rank B$. The followings are equivalent:
\begin{enumerate}
  \item $B$ is homogeneous and simplicial.
  \item $B$ is isomorphic to a positive affine monoid $M$ with the Hilbert basis as follows: for some $\alpha\in\mathbb{Z}_{>0}$,
  \[\Hilb(M)=\lbrace a_1,\ldots, a_c, e_1, \ldots, e_d\rbrace\subset\Bigl\{ x\in\mathbb{Z}_{\geq0}^d\mid\sum_{i=1}^dx_{[i]}=\alpha\Bigr\},\]
  where $e_1=(\alpha, 0, \ldots, 0), e_2=(0, \alpha,\ldots, 0),\ldots, e_d=(0,\ldots, 0, \alpha)$.
\end{enumerate}
\end{proposition}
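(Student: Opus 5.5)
We prove the two implications separately. The direction (2)$\Rightarrow$(1) is short. Suppose $B\simeq M$ with $\Hilb(M)$ as described. The map $\gamma(x)=\frac{1}{\alpha}\sum_{i=1}^d x_{[i]}$ is additive on $M$. It takes value $1$ on every Hilbert basis element, so it takes integer values on $M$, because $M$ is generated by $\Hilb(M)$. It is nonnegative since $M\subset\mathbb{Z}^d_{\geq0}$, and $\gamma(x)=0$ forces $x=0$. Hence $\gamma$ is a positive grading with $\gamma(\Hilb(M))=1$, and $M$ (hence $B$) is homogeneous. Every element of $\Hilb(M)$ lies in $\mathbb{R}^d_{\geq0}=C(e_1,\ldots,e_d)$, and the $e_i$ lie in $M$, so $C(M)=\mathbb{R}_{\geq0}e_1+\cdots+\mathbb{R}_{\geq0}e_d$ with $e_1,\ldots,e_d$ linearly independent. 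Thus $M$ is simplicial, and both properties transfer to $B$ through the isomorphism.

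For (1)$\Rightarrow$(2), we assume $B$ is homogeneous with grading $\gamma$ and simplicial. We first apply the Corollary of Subsection 2.1 to embed $B\subset\gp(B)\simeq\mathbb{Z}^d$ with $C(B)=\sum\mathbb{R}_{\geq0}f_i$ for linearly independent $f_1,\ldots,f_d\in\Hilb(B)$. Since $f_1,\ldots,f_d$ form a $\mathbb{Q}$-basis of $\mathbb{Q}^d$, each $b\in B$ has unique coordinates $b=\sum_i q_i(b)f_i$ with $q_i(b)\in\mathbb{Q}_{\geq0}$, the nonnegativity coming from $b\in C(B)$. Let $N$ be a common denominator of all $q_i(a)$ for $a$ in the finite set $\Hilb(B)$. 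The map
\[
\phi:B\to\mathbb{Z}^d_{\geq0},\qquad \phi(b)=\bigl(Nq_1(b),\ldots,Nq_d(b)\bigr)
\]
is an injective monoid homomorphism, because it is the restriction of a linear isomorphism of $\mathbb{Q}^d$. Put $M=\phi(B)$. Then $\phi(f_i)=Ne_i'$, where $e_i'$ is the $i$-th unit vector, and $\Hilb(M)=\phi(\Hilb(B))$.

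Next we identify the coordinate sum with $N\gamma$. Extend $\gamma$ to a $\mathbb{Q}$-linear form on $\mathbb{Q}^d=\mathbb{Q}B$; this is possible because $\gamma$ is a homomorphism on $B$, so it extends to $\gp(B)$ and then tensors with $\mathbb{Q}$. Since $\gamma(f_i)=1$ for every $i$, for each $b\in B$ we get
\[
\gamma(b)=\sum_i q_i(b)\,\gamma(f_i)=\sum_i q_i(b).
\]
Therefore every $a\in\Hilb(B)$ satisfies $\sum_i\phi(a)_{[i]}=N\gamma(a)=N$. Setting $\alpha=N$, all of $\Hilb(M)$ lies in $\{x\in\mathbb{Z}^d_{\geq0}\mid\sum_i x_{[i]}=\alpha\}$, and $\phi(f_i)=\alpha e_i'$ gives exactly the vectors $e_1,\ldots,e_d$ of the statement. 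The remaining elements of $\Hilb(M)$ are labelled $a_1,\ldots,a_c$. Finally, $M$ is positive because it sits inside $\mathbb{Z}^d_{\geq0}$.

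The main subtlety is justifying that the linearly independent cone generators can be chosen in $\Hilb(B)$ and that each has $\gamma$-value $1$. The first is exactly the Lemma and Corollary of Subsection 2.1. The second is automatic once the generators lie in $\Hilb(B)$, by homogeneity; without it the coordinate sum would not be constant on $\Hilb(M)$.
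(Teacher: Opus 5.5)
Your proof is correct and follows the paper's argument. You take linearly independent cone generators $f_1,\ldots,f_d\in\Hilb(B)$, rescale the $f$-coordinates by a common denominator (your $N$ is the paper's $\alpha$ in $\phi_\alpha$), and use $\gamma(f_i)=1$ to get coordinate sum $\alpha$ on $\Hilb(M)$. The only cosmetic difference is that you extend $\gamma$ to a $\mathbb{Q}$-linear form, whereas the paper applies $\gamma$ directly to the relation $\alpha b=\sum_j\phi_\alpha(b)_{[j]}f_j$ inside $B$, which makes the extension unnecessary.
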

\begin{proof}
$2\Rightarrow 1$: $e_1,\ldots, e_d$ is linearly independent generators of $C(M)$. Moreover, the monoid homomorphism $\gamma: M\to\mathbb{Z}_{\geq0}$ defined by 
\[\gamma(x)=\frac{1}{\alpha}\sum_{i=1}^dx_{[i]},\]
is a positive grading and sends every element of $\Hilb(M)$ to $1$. 

$1\Rightarrow 2$: Let $f_1,\ldots, f_d\in\Hilb(B)$ be linearly independent generators of $C(B)$. Let $\Hilb(B)=\lbrace b_1,\ldots, b_c, f_1,\ldots, f_d\rbrace$. For $\alpha\in\mathbb{Z}_{>0}$, we consider the isomorphism $\phi_{\alpha}: \sum_{i=1}^d\mathbb{R}f_i\to\mathbb{R}^d$ of $\mathbb{R}$-vector spaces, defined by 
\[\phi_{\alpha}(f_i)=(0,\ldots, 0, \alpha, 0,\ldots, 0),\] 
where the $i$-th entry is the only nonzero entry, and equals to $\alpha$. Then, an isomorphism $B\simeq\phi_{\alpha}(B)$ of monoids is induced.  

Since $b_i\in\mathbb{Z}^d\cap\sum_{i=1}^d\mathbb{R}_{\geq0}f_i$, each $b_i$ is contained in $\sum_{i=1}^d\mathbb{Q}_{\geq0}f_i$ by Gaussian elimination. We can write $\phi_{\alpha}(b_i)=(c_i)_{[1]}\phi_{\alpha}(f_1)+\cdots+(c_i)_{[d]}\phi_{\alpha}(f_d)$, where $c_i\in\mathbb{Q}_{\geq0}^d$. Taking a suitable $\alpha$, we may assume $\phi_{\alpha}(b_i)\in\mathbb{Z}_{\geq0}^d$. From the following equalities
\[\alpha\phi_{\alpha}(b_i)=\phi_{\alpha}(b_i)_{[1]}\phi_{\alpha}(f_1)+\cdots+\phi_{\alpha}(b_i)_{[d]}\phi_{\alpha}(f_d),\]
\[\therefore\quad\phi_{\alpha}(\alpha b_i)=\phi_{\alpha}(\phi_{\alpha}(b_i)_{[1]}f_1+\cdots+\phi_{\alpha}(b_i)_{[d]}f_d),\]
we have $\alpha b_i=\phi_{\alpha}(b_i)_{[1]}f_1+\cdots+\phi_{\alpha}(b_i)_{[d]}f_d$. 

There is a monoid homomorphism $\gamma: B\to\mathbb{Z}_{\geq0}$ such that $\gamma(b_i)=1$ and $\gamma(f_j)=1$ for $j=1,\ldots, d$, by the homogeneous condition of $B$. Then, we have $\alpha=\phi_{\alpha}(b_i)_{[1]}+\cdots+\phi_{\alpha}(b_i)_{[d]}$. Set $a_i=\phi_{\alpha}(b_i)$ and $e_i=\phi_{\alpha}(f_i)$. Then, $\phi_{\alpha}(B)$ is a monoid satisfying Condition 2.
\end{proof}

\begin{remark}
If $g=\gcd\lbrace (a_i)_{[j]}\mid1\leq i\leq c, 1\leq j\leq d\rbrace\neq1$, using $\phi_{\alpha/g}$ through the proof above, we may assume $g=1$ in Condition 2.
\end{remark}

Hereafter, the phrase ``$\mathbb{K}[B]$ is a simplicial affine semigroup ring'' will mean that $B$ is a monoid $M$ in Condition 2. 

Moreover, if $B=M$, we will consider $B$ with a positive grading $\gamma: B\to\mathbb{Z}_{\geq0}$ defined by $\gamma(b)=\frac{1}{\alpha}\sum_{i=1}^db_{[i]}$. Then, $\gamma$ is denoted by $\deg$, and $\deg b$ is called the \textit{degree} of $b$. The degree of $b\in B$ equals to the number of irreducible elements required to express $b$ as a sum of irreducible elements in $B$. Note that we defined $\deg t^b:=\deg b$ for $t^b\in\mathbb{K}[B]$ in Section 1. There is a degree-preserving surjective homomorphism
\[\pi:S=\mathbb{K}[x_1,\ldots, x_c, y_1,\ldots, y_d]\twoheadrightarrow\mathbb{K}[B],\]
defined by $\pi(x_i)=t^{a_i}$ for $i=1,\ldots, c$ and $\pi(y_i)=t^{e_i}$ for $i=1,\ldots, d$. 

In the following, for a simplicial affine semigroup ring $\mathbb{K}[B]$, we will consider the $S$-module structure induced by $\pi$, and call $\ker\pi$ a \textit{simplicial toric ideal}.

Since the images of $y_1,\ldots,y_d$ in $S/\ker\pi$ form a transcendence basis of the fraction field of $S/\ker\pi$ over $\mathbb{K}$, we have $d=\dim\mathbb{K}[B]$. Moreover, $c$ equals to the \textit{hight} of $\ker\pi$, often denoted by $\codim\mathbb{K}[B]$.

\subsection{Decomposition of simplicial affine semigroup rings}
Let $\mathbb{K}[B]$ be a simplicial affine monoid. Then, we may assume that 
\[\Hilb(B)=\lbrace a_1,\ldots, a_c, e_1, \ldots, e_d\rbrace\subset\Bigl\{ x\in\mathbb{Z}_{\geq0}^d\mid\sum_{i=1}^dx_{[i]}=\alpha\Bigr\},\]
for some $\alpha>0$, as in Subsection 2.2. Let $A$ be a monoid generated by $e_1,\ldots, e_d$, that is, $A=\sum_{i=1}^d\mathbb{Z}_{\geq0}e_i=\alpha\mathbb{Z}_{\geq0}^d\subset B$. We consider the set 
\[B_A:=\bigl\{ x\in B\mid x-a\notin B\quad\text{for any $a\in A\setminus\{0\}$}\bigr\}.\]
We note that $x+y\in B_A$ with $x, y\in B$ implies $x, y\in B_A$. Moreover, every element $x$ of $B_A$ is of the form 
\[x=\sum_{i=1}^c\mu_{[i]} a_i\quad\text{with $\mu\in\mathbb{Z}^c$, \;$0\leq\mu_{[i]}\leq\alpha$}.\]
Then, $B_A$ is a finite set. 

For $x, y\in\gp(B)$, we define
\[x\sim y\iff\ x-y\in\gp(A)=\alpha\mathbb{Z}^d.\]
This is the equivalence relation on $\gp(B)$ modulo its subgroup $\gp(A)$. Let $e$ be the number of equivalence classes in $\gp(B)$, that is, $e:=\#(\gp(B)/\gp(A))$. We restrict the relation to $B_A\subset\gp(B)$, and consider the equivalence classes in $B_A$. The number of equivalence classes in $B_A$ is also $e$ (see \cite[Section 2]{Nit}). Let $\Gamma_1,\ldots, \Gamma_e\subset B_A$ be all the equivalence classes in $B_A$. For each $i=1,\ldots, e$, 
\[h_i:=(\;\min\lbrace \;b_{[1]}\mid b\in\Gamma_i\;\rbrace, \ldots, \min\lbrace \;b_{[d]}\mid b\in\Gamma_i\;\rbrace\;)\in\mathbb{Z}_{\geq0}^d.\]
It is noted that $b-h_i\in A=\alpha\mathbb{Z}_{\geq0}^d$ for any $b\in\Gamma_i$. Then, $h_i\in\gp(B)$. For $i=1,\ldots, e$, we consider
\[I_i:=(t^{b-h_i}\mid b\in\Gamma_i)\mathbb{K}[A],\]
an ideal of $\mathbb{K}[A]$ generated by $t^{b-h_i}$ with $b\in\Gamma_i$.

\begin{proposition}[{\cite[Proposition 2.2]{HoSt}}]
There is an isomorphism of $\mathbb{Z}^d$-graded $\mathbb{K}[A]$-modules,
\[\mathbb{K}[B]\simeq\oplus_{i=1}^eI_i(-h_i).\]
\end{proposition}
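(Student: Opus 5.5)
We will prove the Hoa--St\"uckrad decomposition by partitioning $B$ according to the cosets of $\gp(A)$ in $\gp(B)$, and identifying each piece with a shifted monomial ideal of $\mathbb{K}[A]$. Since $\mathbb{K}[B]$ has $\mathbb{K}$-basis $\{t^b\mid b\in B\}$ and $\mathbb{K}[A]$ acts by $t^a\cdot t^b=t^{a+b}$, each $\mathbb{K}[A]$-submodule spanned by the $t^b$ with $b$ in a fixed coset is $\mathbb{Z}^d$-graded. Hence it suffices to prove the following:

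\begin{enumerate}
\item the sets $B\cap(\Gamma_i+\gp(A))$, $i=1,\ldots,e$, partition $B$;
\item for each $i$, the module $\bigoplus_{b\in B\cap(\Gamma_i+\gp(A))}\mathbb{K}t^b$ is isomorphic to $I_i(-h_i)$.
\end{enumerate}

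\textbf{Step 1: partition.} Every $b\in B$ can be written as $b=x+a$ with $x\in B_A$ and $a\in A$. To see this, repeatedly subtract elements $e_j$ while the result stays in $B$; the process terminates because the degree drops. Hence every coset meeting $B$ meets $B_A$. Cosets that meet $B_A$ are exactly the classes $\Gamma_1,\ldots,\Gamma_e$, so $B=\bigsqcup_i B^{(i)}$ with $B^{(i)}:=B\cap(\Gamma_i+\gp(A))$. Here we use the cited fact that $B_A$ meets all $e$ classes of $\gp(B)/\gp(A)$; in fact disjointness and completeness over the classes meeting $B$ is all that is needed.

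\textbf{Step 2: description of each piece.} We claim that $B^{(i)}=\Gamma_i+A$.

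For ``$\supseteq$'': this is clear, since $B+A\subset B$.

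For ``$\subseteq$'': take $b\in B^{(i)}$ and apply the reduction of Step 1 to get $b=x+a$ with $x\in B_A$ and $a\in A$. Since $x\sim b$, we have $x\in\Gamma_i$.

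Next we use the observation recorded before the statement: $b-h_i\in A=\alpha\mathbb{Z}_{\ge0}^d$ for every $b\in\Gamma_i$. Concretely, all elements of $\Gamma_i$ are congruent mod $\alpha$ coordinatewise, and $h_i$ is the coordinatewise minimum. It follows that $b\mapsto b-h_i$ is a bijection
\[B^{(i)}\longrightarrow \{\,b-h_i\mid b\in\Gamma_i\,\}+A\subset A.\]
The target is exactly the set of exponents of the monomials in the monomial ideal $I_i$ of $\mathbb{K}[A]$. Indeed, the monomials of a monomial ideal are precisely the generators' exponents plus $A$.

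\textbf{Step 3: the isomorphism.} Define the $\mathbb{K}$-linear map
\[\bigoplus_{b\in B^{(i)}}\mathbb{K}t^b\longrightarrow I_i(-h_i),\qquad t^b\mapsto t^{b-h_i}.\]
It sends basis to basis by Step 2, so it is bijective. It commutes with multiplication by $t^a$ for $a\in A$, so it is $\mathbb{K}[A]$-linear. It preserves the $\mathbb{Z}^d$-grading once the shift by $h_i$ is included: $t^b$ has degree $b$, and $t^{b-h_i}$ has degree $(b-h_i)+h_i=b$ in $I_i(-h_i)$. Summing over $i$ gives the stated isomorphism $\mathbb{K}[B]\simeq\bigoplus_{i=1}^e I_i(-h_i)$.

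\textbf{Main obstacle.} The main point to check carefully is the partition and reduction in Step 1, specifically that every element of $B$ decomposes as (element of $B_A$) $+$ (element of $A$). The termination argument handles this. Once it is in place, the identification $B^{(i)}=\Gamma_i+A$ is straightforward.
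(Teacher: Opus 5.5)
Your proof is correct and is essentially the standard argument behind the cited result of Hoa--St\"uckrad. The paper gives no proof of its own, but it later uses exactly your ingredients: $B=B_A+A=\bigsqcup_{i=1}^e(\Gamma_i+A)$ and $b-h_i\in A$ for $b\in\Gamma_i$, with the isomorphism given by the translation $t^b\mapsto t^{b-h_i}$ on each piece. The one step you left implicit is why your $e_j$-subtraction process stops inside $B_A$: if $x-a\in B$ for some $0\neq a=\sum_j\nu_{[j]}e_j\in A$, then $x-e_j=(x-a)+(a-e_j)\in B$ for any $j$ with $\nu_{[j]}>0$, so once no $x-e_j$ lies in $B$ you do have $x\in B_A$.
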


Let $T=\mathbb{K}[y_1,\ldots, y_d]$ be a polynomial ring with $\deg(y_i)=1$. $T$ is isomorphic to $\mathbb{K}[A]$, where $y_i$ corresponds to $t^{e_i}$. Then, each $I_i$ corresponds to a monomial ideal of $T$,
\[\tilde{I_i}:=(y^{\frac{b-h_i}{\alpha}}\mid b\in\Gamma_i)\subset T.\]
Replacing $I_i$ by $\tilde{I_i}$ in the above isomorphism, we obtain 
\[\mathbb{K}[B]\simeq\oplus_{i=1}^e\tilde{I_i}(-h_i).\]
Using the Macaulay2 package MonomialAlgebras, this decomposition can be computed via the command \texttt{decomposeMonomialAlgebra} \cite{BEN}. 

\begin{remark}
The generating set $\{y^{\frac{b-h_i}{\alpha}}\mid b\in\Gamma_i\}$ for $\tilde{I_i}$ is a minimal system of generators. In other words, the number of elements in $\Gamma_i$ is exactly the number of elements in a minimal system of generators of $\tilde{I_i}$.  
\end{remark}

\section{\large Initial ideal of simplicial toric ideals}
In Subsection 3.1, we describe generators of the initial ideal $\initial_{\prec}(\ker\pi)$ for a simplicial toric ideal $\ker\pi$. In Subsection 3.2, we illustrate, through an example, how one can obtain the reduced Gr\"obner basis of $\ker\pi$.
\subsection{Monomials in the initial ideal of simplicial toric ideals}
Let $\prec$ be the \textit{graded reverse lexicographic order} on $\mathbb{K}[x_1,\ldots, x_c, y_1, \ldots, y_d]$, that is, $\prec$ is a total order defined as follows: for $(\mu_1, \nu_1), (\mu_2, \nu_2)\in\mathbb{Z}_{\geq0}^c\oplus\mathbb{Z}_{\geq0}^d$, $x^{\mu_1}y^{\nu_1}\prec x^{\mu_2}y^{\nu_2}$ if and only if 
\begin{enumerate}
\item[$\cdot$] $\deg x^{\mu_1}y^{\nu_1}<\deg x^{\mu_2}y^{\nu_2}$, or
\item[$\cdot$]  $\deg x^{\mu_1}y^{\nu_1}=\deg x^{\mu_2}y^{\nu_2}$ and the last nonzero entry of $(\mu_2, \nu_2)-(\mu_1, \nu_1)$ is negative.
\end{enumerate}
Every polynomial $f\in S=\mathbb{K}[x_1,\ldots, x_c, y_1, \ldots, y_d]$ has the unique \textit{initial term} with respect to $\prec$, denoted by $\initial_{\prec}(f)$. For an ideal $I\subset S$, the \textit{initial ideal} with respect to $\prec$, denoted by $\initial_{\prec}(I)$, is an ideal generated by $\initial_{\prec}(f)$ with $f\in I$. 

Throughout the paper, for subsets $S_1, S_2\subset\mathbb{Z}^d$, 
\[S_1+S_2:=\lbrace a+b\mid a\in S_1, b\in S_2\rbrace.\]
Let $\mathbb{K}[B]$ be a simplicial affine semigroup ring. For $b\in B$,
\[\pi^{-1}(t^b)=\Bigl\{ x^{\mu}y^{\nu}\mid b=\sum_{i=1}^c\mu_{[i]}a_i+\sum_{i=1}^d\nu_{[i]}e_i\Bigr\}.\]
This is a finite set because every monomial in $\pi^{-1}(t^b)$ is of degree $\deg b$. Then, we can define a monomial $m_b$ associated with $b$ as follows:
\[m_b:=\min_{\prec}\lbrace x^{\mu}y^{\nu}\in\pi^{-1}(t^b)\rbrace.\]
Set $\mathscr{M}_B:=\lbrace m_b\mid b\in B\rbrace$.

\begin{proposition}
$\mathscr{M}_{\initial_{\prec}(\ker\pi)}=\mathscr{M}_S\setminus\mathscr{M}_B$.
\end{proposition}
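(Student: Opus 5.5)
We will prove the equality by showing the two inclusions separately. The key input is that $\ker\pi$ is a toric ideal, hence spanned as a $\mathbb{K}$-vector space by binomials $x^{\mu_1}y^{\nu_1}-x^{\mu_2}y^{\nu_2}$ with $\pi(x^{\mu_1}y^{\nu_1})=\pi(x^{\mu_2}y^{\nu_2})$. This follows because $\ker\pi$ is homogeneous with respect to the $B$-grading $\deg_B x^\mu y^\nu=\sum\mu_{[i]}a_i+\sum\nu_{[i]}e_i$. Each $B$-graded component of $\ker\pi$ is the set of $f=\sum_{m\in\pi^{-1}(t^b)}c_m m$ with $\sum c_m=0$.

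First we show $\mathscr{M}_{\initial_{\prec}(\ker\pi)}\subset\mathscr{M}_S\setminus\mathscr{M}_B$. Let $u$ be a monomial in $\initial_{\prec}(\ker\pi)$. Since $\ker\pi$ is $B$-graded, the initial ideal is spanned by initial terms of $B$-homogeneous elements. Thus $u$ is divisible by $\initial_{\prec}(f)$ for some $B$-homogeneous $f\in\ker\pi$ of some degree $b'$. We may write $f=\sum c_m m$ over $m\in\pi^{-1}(t^{b'})$ with $\sum c_m=0$. The leading monomial $v$ has $c_v\neq0$, so some other $m$ with $c_m\ne0$ exists, and $m\prec v$. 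Hence $v\neq m_{b'}$.

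Next we check that $u=vw$ is not $m_b$ for $b=b'+\deg_B w$. Since $m\prec v$, multiplicativity of the monomial order gives $mw\prec vw=u$. Also $mw\in\pi^{-1}(t^b)$. So $u$ is not the $\prec$-minimum of $\pi^{-1}(t^b)$. Every monomial $u$ lies in $\pi^{-1}(t^b)$ for exactly one $b$, namely $b=\deg_B u$. Therefore $u\notin\mathscr{M}_B$.

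For the reverse inclusion, take a monomial $u\notin\mathscr{M}_B$ and set $b=\deg_B u$. Then $u\neq m_b$, and both lie in $\pi^{-1}(t^b)$. So $u-m_b\in\ker\pi$, and its initial term is $u$ because $m_b\prec u$. Hence $u\in\initial_{\prec}(\ker\pi)$.

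The only point needing care is the claim that monomials of the initial ideal are multiples of leading monomials of $B$-homogeneous elements. This holds because a polynomial ideal's initial ideal is monomial, generated by $\initial_\prec(f)$ for $f$ in any generating set closed under taking homogeneous components. Alternatively, one can avoid this entirely in the first inclusion. Given $u\in\initial_\prec(\ker\pi)$, $u$ is divisible by $\initial_\prec(g)$ for some $g\in\ker\pi$. The $B$-homogeneous component of $g$ containing its leading monomial lies in $\ker\pi$ and has that same leading monomial. We will use this reduction; the rest is routine.
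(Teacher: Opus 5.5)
Your proof is correct, but for one of the two inclusions it takes a different route than the paper. The paper proves only $\mathscr{M}_{\initial_{\prec}(\ker\pi)}\subset\mathscr{M}_S\setminus\mathscr{M}_B$ directly, and in one line: a monomial $n$ of the initial ideal must have some $m\prec n$ in its fiber, ``otherwise $n\notin\initial_{\prec}(\ker\pi)$''. It then concludes equality by a basis argument. Both $\mathscr{M}_B$ (because $\{t^b\}$ is a basis of $\mathbb{K}[B]$) and the standard monomials $\mathscr{M}_S\setminus\mathscr{M}_{\initial_{\prec}(\ker\pi)}$ (by Macaulay's theorem) map bijectively onto $\mathbb{K}$-bases of $S/\ker\pi$. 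So the inclusion of the former in the latter must be an equality.

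You instead prove both inclusions by hand. Your description of the $B$-graded pieces of $\ker\pi$ (coefficients over a fiber summing to zero) supplies exactly the justification the paper leaves implicit in its ``otherwise''. Your reverse inclusion, via the explicit binomial $u-m_b\in\ker\pi$ with leading monomial $u$, removes the need for Macaulay's theorem altogether. It also directly produces the binomials $n-m_b$ that the paper later uses for the reduced Gr\"obner basis.

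In the other direction, the paper's basis argument shows that only one inclusion needs a direct proof. Paired with your easy inclusion $\mathscr{M}_S\setminus\mathscr{M}_B\subset\mathscr{M}_{\initial_{\prec}(\ker\pi)}$ (standard monomials lie in $\mathscr{M}_B$), it would give equality without any fiberwise analysis of $\ker\pi$.

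One small correction: drop the first justification in your last paragraph, because as phrased it is false. Initial terms of an arbitrary generating set of an ideal need not generate its initial ideal; that is precisely what Gr\"obner bases are for. The alternative you actually adopt is the right argument: pass to the $B$-homogeneous component of $g$ containing $\initial_{\prec}(g)$, which lies in $\ker\pi$ and has the same leading monomial.
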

\begin{proof}
Let $n\in\mathscr{M}_{\initial_{\prec}(\ker\pi)}$ and $\pi(n)=t^b$ with $b\in B$. Then, $\pi^{-1}(t^b)$ contains a monomial $m\prec n$; otherwise $n\notin\initial_{\prec}(\ker\pi)$. By definition, $n\notin\mathscr{M}_B$. Thus, $\mathscr{M}_B\subset\mathscr{M}_S\setminus\mathscr{M}_{\initial_{\prec}(\ker\pi)}$. Since $\mathbb{K}[B]$ is a $\mathbb{K}$-vector space with a basis $\lbrace t^b\mid b\in B\rbrace$, there is a bijection induced by $\pi$,
\[\mathscr{M}_B\to\lbrace\text{$\mathbb{K}$-basis of $S/\ker\pi$}\rbrace;\quad m_b\mapsto m_b\mod\ker\pi.\]
Moreover, we have another bijection due to Macaulay's theorem (see \cite[Theorem 15.3]{Eis95}), 
\[\mathscr{M}_S\setminus\mathscr{M}_{\initial_{\prec}(\ker\pi)}\to\lbrace\text{$\mathbb{K}$-basis of $S/\ker\pi$}\rbrace;\quad m\mapsto m\mod\ker\pi.\]
Hence,  $\mathscr{M}_B=\mathscr{M}_S\setminus\mathscr{M}_{\initial_{\prec}(\ker\pi)}$.
\end{proof}

It is noted that we have the following description for $m_b$: 
\begin{enumerate}
\item[$\cdot$] For $a\in A$, the representation $a=\sum_{i=1}^d\nu_{[i]}e_i$ with $\nu\in\mathbb{Z}_{\geq0}^d$ is uniquely determined, and then $m_a=y^{\nu}$.  
\item[$\cdot$] For $b\in B_A$, $m_{b}=\min_{\prec}\lbrace x^{\mu}\mid b=\sum_{i=1}^c\mu_{[i]}a_i\rbrace$.
\end{enumerate}

\begin{definition}
Let $a, b\in\mathbb{Z}_{\geq0}^d$. The \textit{join} $a\vee b$ of $a$ and $b$ is a vector defined by $(a\vee b)_{[i]}:=\max(a_{[i]}, b_{[i]})$ for $i=1,\ldots, d$.
\end{definition}

Here, we introduce a total order into each equivalence class in $B_A$.
\begin{definition}
Let $b, c\in B_A$ with $b\sim c$, that is, $b-c\in\alpha\mathbb{Z}^d$. 
\[b\prec c\iff\text{the last nonzero entry of $(b-c)$ is negative}.\]
\end{definition} 

Let $\Gamma\in B_A$ be one of the equivalence classes in $B_A$. Since the order $\prec$ on $\Gamma$ is a total order, we can write $\Gamma=\lbrace b_1,\ldots, b_t\rbrace$ with $b_1\prec\cdots\prec b_t$. 

For $1\leq i<j\leq t$, we will use the following notation throughout the paper:
\[n(b_i, b_j):=m_{b_j}m_{(b_i\vee b_j-b_j)}.\] 
It is noted that we have obtained the descriptions for $m_{b_j}$ and $m_{(b_i\vee b_j-b_j)}$ since $b_j\in B_A$ and $b_i\vee b_j-b_j\in A$.

\begin{remark}
In the notation above, $n(b_i, b_j)\notin\mathscr{M}_B$. Indeed, 
\[m_{b_i}m_{(b_i\vee b_j-b_i)}\prec n(b_i, b_j),\]
and $n(b_i, b_j)-m_{b_i}m_{(b_i\vee b_j-b_i)}\in\ker\pi$.
\end{remark}
\begin{proof}
The kernel of $\pi$ contains $n(b_i, b_j)-m_{b_i}m_{(b_i\vee b_j-b_i)}$, since both $n(b_i, b_j)$ and $m_{b_i}m_{(b_i\vee b_j-b_i)}$ are contained in $\pi^{-1}(t^{b_i\vee b_j})$. Let $\nu_i, \nu_j\in\mathbb{Z}_{\geq0}^d$ such that 
\[b_i\vee b_j-b_i=\sum_{k=1}^d(\nu_i)_{[k]}e_k,\quad b_i\vee b_j-b_j=\sum_{k=1}^d(\nu_j)_{[k]}e_k.\]
Then, $b_i-b_j=\sum_{k=1}^d(\nu_j-\nu_i)_{[k]}e_k$. Since $b_i\prec b_j$, the last nonzero entry of $\nu_j-\nu_i$ is negative. Thus, 
\[m_{b_i}m_{(b_i\vee b_j-b_i)}=m_{b_i}y^{\nu_i}\prec m_{b_j}y^{\nu_j}=n(b_i, b_j).\]
It is noted that $m_{b_i}$, $m_{b_j}$ are monomials in $\mathbb{K}[x_1,\ldots, x_c]$ and they do not affect the comparison with respect to the graded reverse lexicographic order $\prec$. 
\end{proof}

Let $T=\mathbb{K}[y_1,\ldots, y_d]$. For $\Gamma=\lbrace b_1,\ldots, b_t\rbrace\subset B_A$ with $b_1\prec\cdots\prec b_t$,
\[\mathscr{M}_{\Gamma}:=m_{b_1}\mathscr{M}_T\cup(m_{b_2}\mathscr{M}_T\setminus n(b_1, b_2)\mathscr{M}_T)\cup\cdots\cup (m_{b_t}\mathscr{M}_T\setminus\bigcup_{i=1}^{t-1}n(b_i, b_t)\mathscr{M}_T),\]
where $n\mathscr{M}_T:=\lbrace nm\mid m\in\mathscr{M}_T\rbrace$ for $n\in\mathscr{M}_S$. If $i\neq j$, the sets $m_{b_i}\mathscr{M}_T$ and $m_{b_j}\mathscr{M}_T$ are disjoint since $m_{b_1},\ldots, m_{b_t}$ are distinct monomials in $\mathbb{K}[x_1,\ldots, x_c]$. Then, $\mathscr{M}_{\Gamma}$ is the following disjoint union:
\[\mathscr{M}_{\Gamma}=\bigsqcup_{k=1}^t\left(m_{b_k}\mathscr{M}_T\setminus\bigcup_{i=1}^{k-1}n(b_i, b_k)\mathscr{M}_T\right).\]

\begin{lemma}
Let $\Gamma=\lbrace b_1,\ldots, b_t\rbrace$ with $b_1\prec\cdots\prec b_t$, be one of the equivalence classes in $B_A$. For any $b\in\Gamma+A$, there exists the unique monomial $x^{\mu}y^{\nu}\in\mathscr{M}_{\Gamma}$ such that $b=\sum_{i=1}^c\mu_{[i]}a_i+\sum_{i=1}^d\nu_{[i]}e_i$.
\end{lemma}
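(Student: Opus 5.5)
The plan is a direct combinatorial argument: classify, for a fixed $b\in\Gamma+A$, which indices $k$ allow $b$ to be written as $b_k$ plus an element of $A$. Then show that the defining exclusions in $\mathscr{M}_{\Gamma}$ keep exactly one of them, namely the $\prec$-smallest.

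First I record what the monomials of $\mathscr{M}_{\Gamma}$ represent. Every element of $\mathscr{M}_{\Gamma}$ has the form $m_{b_k}y^{\nu}$ for a unique $k$ and $\nu\in\mathbb{Z}_{\geq0}^d$. This uses the disjointness noted before the lemma and the fact that $m_{b_k}\in\mathbb{K}[x_1,\ldots,x_c]$. Its image under $\pi$ is $t^{b_k+\sum_i\nu_{[i]}e_i}$. Conversely, $x^{\mu}y^{\nu}\in\mathscr{M}_{\Gamma}$ represents $b$ only if $x^{\mu}=m_{b_k}$ and $b-b_k=\sum\nu_{[i]}e_i\in A$. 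Since $e_1,\ldots,e_d$ are linearly independent, $\nu$ is then uniquely determined by $k$. For $b\in\Gamma+A$ I therefore set
\[K(b):=\{k\mid b-b_k\in A\},\]
which is nonempty by hypothesis. Because all $b_k$ lie in the same class, $b-b_k\in\alpha\mathbb{Z}^d$ automatically, so $k\in K(b)$ if and only if $b_k\leq b$ componentwise. The candidate monomials representing $b$ are exactly $m_{b_k}y^{\nu^{(k)}}$ for $k\in K(b)$, where $b-b_k=\sum_i\nu^{(k)}_{[i]}e_i$.

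The key step is to show that $m_{b_k}y^{\nu^{(k)}}$ is removed in the definition of $\mathscr{M}_{\Gamma}$ if and only if some $i<k$ lies in $K(b)$. Write $b_i\vee b_k-b_k=\sum_l(\nu_{ik})_{[l]}e_l$, so that $n(b_i,b_k)=m_{b_k}y^{\nu_{ik}}$. Since the $x$-parts agree, $m_{b_k}y^{\nu^{(k)}}\in n(b_i,b_k)\mathscr{M}_T$ if and only if $\nu_{ik}\leq\nu^{(k)}$. This is equivalent to $b_i\vee b_k\leq b$ componentwise. Since $b_k\leq b$ is already known, this is equivalent to $b_i\leq b$, that is, to $i\in K(b)$.

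Hence $m_{b_k}y^{\nu^{(k)}}\in\mathscr{M}_{\Gamma}$ exactly when $k=\min K(b)$. That gives existence, because $K(b)\neq\emptyset$, and uniqueness, because the minimum is unique and distinct $k$ give distinct monomials.

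The main point needing care is the equivalence in the middle step: the join condition $b_i\vee b_k\leq b$ must reduce to $b_i\leq b$, and $b-b_i$ must lie in $A$ rather than merely in $\gp(A)$. Both follow from $b_i\sim b_k\sim b$ together with the componentwise inequalities, and I expect the rest to be routine bookkeeping.
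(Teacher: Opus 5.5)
Your proposal is correct and is essentially the paper's argument. Both single out $k=\min\{l\mid b-b_l\in A\}$ and use that, inside one equivalence class, $(b_i+A)\cap(b_k+A)=b_i\vee b_k+A$, so that lying in an excluded set $n(\cdot,\cdot)\mathscr{M}_T$ amounts to the smaller-indexed element also lying below $b$. Your ``if and only if'' formulation also makes the existence half explicit, namely that $m_{b_k}m_{(b-b_k)}$ is not removed by any $n(b_i,b_k)$ with $i<k$; the paper's proof argues only uniqueness and leaves this implicit.
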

\begin{proof}
Set $k:=\min\lbrace l\mid b-b_l\in A\rbrace$ for $b\in\Gamma+A$. We show that if $x^{\mu}y^{\nu}\in\mathscr{M}_{\Gamma}$ satisfies $b=\sum_{i=1}^c\mu_{[i]}a_i+\sum_{i=1}^d\nu_{[i]}e_i$, then $x^{\mu}=m_{b_k}$ and $y^{\nu}=m_{(b-b_k)}$. 

It suffices to show that only $x^{\mu}=m_{b_k}$. By construction of $\mathscr{M}_{\Gamma}$, $x^{\mu}$ equals to one of the $m_{b_1},\ldots, m_{b_t}$. By definition of $k$, $x^{\mu}\neq m_{b_i}$ for any $i<k$. Assume that $x^{\mu}=m_{b_i}$ for some $i>k$. Then, 
\[b\in(b_k+A)\cap(b_i+A)=b_k\vee b_i+A,\]
\[\therefore\quad x^{\mu}y^{\nu}=m_{b_i}y^{\nu}\in n(b_k, b_i)\mathscr{M}_T.\]
This contradicts our hypothesis $x^{\mu}y^{\nu}\in\mathscr{M}_\Gamma$. Thus, $x^{\mu}=m_{b_k}$.
\end{proof}

\begin{proposition}
Let $\Gamma_1,\ldots, \Gamma_e$ be all the equivalence classes in $B_A$. Then, 
\[\mathscr{M}_B=\bigcup_{i=1}^e\mathscr{M}_{\Gamma_i}.\]
\end{proposition}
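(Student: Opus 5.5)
The idea is to show the inclusion $\mathscr{M}_B\subset\bigcup_{i=1}^e\mathscr{M}_{\Gamma_i}$ directly. Equality will then follow by comparing how both sets map onto $B$ under $\pi$.

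\textbf{Decomposition of $B$.} First I check that
\[B=\bigsqcup_{i=1}^e(\Gamma_i+A).\]
Take $b\in B$ and subtract elements of $A\setminus\{0\}$ as long as the result stays in $B$. This terminates because the degree drops each time, and it ends at some $b'\in B_A$ with $b\in b'+A$. Hence $b'$ lies in some class $\Gamma_i$. The sets $\Gamma_i+A$ are pairwise disjoint because distinct classes are distinct modulo $\gp(A)$, while adding $A$ does not change the class. Combining this with the preceding Lemma, $\pi$ induces a bijection
\[\bigcup_i\mathscr{M}_{\Gamma_i}\to\{t^b\mid b\in B\}.\]
Since $\mathscr{M}_B\to\{t^b\}$, $m_b\mapsto t^b$, is also a bijection, it suffices to prove $\mathscr{M}_B\subset\bigcup_i\mathscr{M}_{\Gamma_i}$. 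Indeed, an inclusion of two sets that both map bijectively onto the same set via $\pi$ must be an equality.

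\textbf{Shape of $m_b$.} Fix $b\in B$ and write $m_b=x^{\mu}y^{\nu}$. Put $b'=\sum_i\mu_{[i]}a_i$, so that $b-b'=\sum_k\nu_{[k]}e_k\in A$.

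First, $b'\in B_A$. Suppose not, so $b'=c+a$ with $c\in B$ and $a\in A\setminus\{0\}$. Then $m_c\,m_a\,y^{\nu}\in\pi^{-1}(t^b)$, and it has the same degree as $m_b$.
\begin{itemize}
\item The $y$-part of $m_c m_a y^{\nu}$ exceeds $y^{\nu}$ by the nonzero vector of exponents of $m_a$, possibly plus $y$-exponents coming from $m_c$.
\item The $y$-variables are the last variables. So the last nonzero entry of $(\mu,\nu)$ minus the exponent vector of $m_c m_a y^{\nu}$ is negative.
\item In graded reverse lexicographic order this gives $m_c m_a y^{\nu}\prec m_b$, which contradicts minimality.
\end{itemize}
So $b'=b_j$ for some element of some class $\Gamma=\{b_1\prec\cdots\prec b_t\}$.

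Next, the factors are the expected ones. Minimality of $m_b$ forces $x^{\mu}=m_{b_j}$, because otherwise replacing $x^{\mu}$ by $m_{b_j}$ lowers the monomial. Multiplication by a fixed monomial preserves $\prec$. Also, $y^{\nu}=m_{(b-b_j)}$, since representations in $A$ are unique.

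\textbf{Exclusion condition (the main point).} It remains to check that $m_b\notin n(b_i,b_j)\mathscr{M}_T$ for every $i<j$. Suppose $m_b=n(b_i,b_j)\,y^{\lambda}$. By the Remark after the definition of $n(b_i,b_j)$, the monomial
\[m_{b_i}m_{(b_i\vee b_j-b_i)}\,y^{\lambda}\]
lies in $\pi^{-1}(t^b)$ and is strictly $\prec$-smaller than $m_b$, again because $\prec$ is a monomial order. This contradicts the definition of $m_b$. Hence $m_b\in\mathscr{M}_{\Gamma}$.

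This exclusion step, together with the $y$-last revlex comparison used to get $b'\in B_A$, is the only place where the specific order matters. I expect it to be the step needing the most care, namely checking that the comparison of exponent vectors really reduces to the $y$-part.
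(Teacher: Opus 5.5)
Your proof is correct, but it runs in the opposite direction from the paper's.

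\textbf{The paper's route.} It fixes $b\in\Gamma+A$ and takes the element of $\bigcup_i\mathscr{M}_{\Gamma_i}$ over $b$ given by Lemma 3.5, namely $m_{b_k}m_{(b-b_k)}$ with $k=\min\{l\mid b-b_l\in A\}$. It then shows this monomial is $\preceq$ every representation $x^{\mu'}y^{\nu'}$ of $b$. To do so it writes $\sum_i\mu'_{[i]}a_i=b_l$ with $l\geq k$ and turns $b_k\preceq b_l$ into a revlex comparison of $y$-parts. This proves both inclusions at once and gives an explicit formula for $m_b$.

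\textbf{Your route.} You start from the minimizer $m_b$ and read off its shape from minimality: its $x$-part sums into $B_A$ and equals $m_{b_j}$, and $m_b$ avoids every $n(b_i,b_j)\mathscr{M}_T$ because of the binomial in Remark 3.4. You then get equality from injectivity of $\pi$ on $\bigcup_i\mathscr{M}_{\Gamma_i}$ together with $B=\bigsqcup_i(\Gamma_i+A)$.

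\textbf{What each buys.} Your route is actually the more careful one. The paper claims that the $x$-part of an arbitrary representation of $b$ is some $b_l\in\Gamma$, and this fails in general. In Example 3.10, $x_1x_4^2$ represents $(13,23)$, yet $a_1+2a_4\notin B_A$. So the paper's comparison tacitly needs exactly your ``$b'\in B_A$'' reduction, which you only ever apply to the minimizer.

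Two small remarks. For the final step you only need the uniqueness half of Lemma 3.5; your argument supplies existence as a byproduct. Also, the identification $y^{\nu}=m_{(b-b_j)}$ is not needed, since $m_b\in m_{b_j}\mathscr{M}_T$ only requires $x^{\mu}=m_{b_j}$.
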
 
\begin{proof}
It is noted that $B=B_A+A=\bigsqcup_{i=1}^e(\Gamma_i+A)$. By Lemma 3.5, for every $b\in B$, a monomial $x^{\mu}y^{\nu}\in\cup_{i=1}^e\mathscr{M}_{\Gamma_i}$ satisfying $b=\sum_{i=1}^c\mu_{[i]}a_i+\sum_{i=1}^d\nu_{[i]}e_i$ is determined uniquely. We have to show that such a $x^{\mu}y^{\nu}$ is $m_b$.  

Let $\Gamma=\lbrace b_1,\ldots, b_t\rbrace$ with $b_1\prec\cdots\prec b_t$, be the equivalence class in $B_A$ such that $b\in\Gamma+A$. If $(\mu^{\prime}, \nu^{\prime})\in\mathbb{Z}_{\geq0}^c\oplus\mathbb{Z}_{\geq0}^d$ satisfies 
\[b=\sum_{i=1}^c(\mu^{\prime})_{[i]}a_i+\sum_{i=1}^d(\nu^{\prime})_{[i]}e_i,\] 
then $\sum_{i=1}^c(\mu^{\prime})_{[i]}a_i=b_l$ for some $1\leq l\leq t$. In the proof of Lemma 3.5, we have seen that $x^{\mu}y^{\nu}=m_{b_k}m_{(b-b_k)}$ with $k:=\min\lbrace i\mid b-b_i\in A\rbrace\leq l$. Since $b_k\preceq b_l$, we have $x^{\mu}y^{\nu}\preceq x^{\mu^{\prime}}y^{\nu^{\prime}}$. Thus, $m_b=x^{\mu}y^{\nu}$.
\end{proof}

Set $\mathscr{M}_1:=\bigcup_{b\in B_A}m_b\mathscr{M}_T$. It is noted that $\mathscr{M}_1\supset\mathscr{M}_B$ and
\[\mathscr{M}_{\initial_{\prec}(\ker\pi)}=\mathscr{M}_S\setminus\mathscr{M}_B=(\mathscr{M}_S\setminus\mathscr{M}_1)\cup(\mathscr{M}_1\setminus\mathscr{M}_B).\]
To describe monomials in $\mathscr{M}_S\setminus\mathscr{M}_1$, we consider the set 
\[\mathscr{N}_1:=\{\text{$n\mid {}^{\exists}x_i$, ${}^{\exists}b\in B_A$ s.t. $n=x_im_b$ and $n\neq m_{(a_i+b)}$}\}.\]
This is a finite subset of $\mathscr{M}_{\mathbb{K}[x_1,\ldots, x_c]}$.

\begin{proposition}
\[\mathscr{M}_S\setminus\mathscr{M}_1=\bigcup_{n\in\mathscr{N}_1}\mathscr{M}_{nS}.\]
\end{proposition}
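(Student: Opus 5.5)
Two inclusions. For "$\supseteq$": let $n=x_im_b\in\mathscr{N}_1$, so $n\neq m_{(a_i+b)}$, and let $w=nu$ with $u\in\mathscr{M}_S$. Since $n\notin\mathscr{M}_B$ (it lies in $\pi^{-1}(t^{a_i+b})$ but is not the minimum), Proposition 3.1 gives $n\in\initial_{\prec}(\ker\pi)$. Hence $w$ lies in the ideal $\initial_{\prec}(\ker\pi)$, and so $w\notin\mathscr{M}_B$. This alone is not enough, because $\mathscr{M}_1\supsetneq\mathscr{M}_B$ in general; we must show directly that $w\notin\mathscr{M}_1$. Suppose $w=m_{b'}y^{\nu}$ with $b'\in B_A$. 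Then the $x$-part of $w$, namely $x_im_b\cdot(\text{x-part of }u)$, equals $m_{b'}$. So $x_im_b$ divides $m_{b'}$.

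The key lemma is that divisors of $m_{b'}$ are again of the form $m_{c}$. Precisely: if $m_{b'}=pq$ with $p,q$ monomials in the $x$'s, $\pi(p)=t^{c}$ and $\pi(q)=t^{c'}$, then $c\in B_A$ (since $B_A$ is closed under summands) and $p=m_c$. To prove this, note that if $p'\prec p$ with $\pi(p')=t^{c}$, then $p'q\prec pq$, because graded reverse lexicographic order is a monomial order. This contradicts the minimality of $m_{b'}$. Applying the lemma to $p=x_im_b$ gives $x_im_b=m_{a_i+b}$, contradicting $n\in\mathscr{N}_1$.

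For "$\subseteq$": take $w=x^{\mu}y^{\nu}\notin\mathscr{M}_1$. Let $c=\sum\mu_{[j]}a_j$; we aim to show that $x^{\mu}$ is divisible by some element of $\mathscr{N}_1$. There are two cases.

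First, suppose $c\in B_A$ but $x^{\mu}\neq m_c$. Then $w\notin\mathscr{M}_1$ forces $x^\mu\ne m_{c}$ whenever $c\in B_A$. Build $x^{\mu}$ up one variable at a time, $x^{\mu}=x_{j_1}\cdots x_{j_k}$. Take the shortest prefix $p=x_{j_1}\cdots x_{j_s}$ whose image lies outside $B_A$ or which differs from the corresponding $m$. Its predecessor $p_0=p/x_{j_s}$ equals $m_{b}$ with $b\in B_A$ (the empty prefix is $m_0=1$). Then $p=x_{j_s}m_b$. If $p\neq m_{(a_{j_s}+b)}$, we have $p\in\mathscr{N}_1$ and $p\mid w$.

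Second, suppose $c\notin B_A$, or more generally that the image of the prefix has left $B_A$ while $p=x_{j_s}m_b=m_{(a_{j_s}+b)}$. The main obstacle is to exclude this, i.e.\ to show that $m_{b''}$ for $b''\notin B_A$ always involves a $y$-variable. The argument: $b''-e_l\in B$ for some $l$, so $b''$ has some representation $x^{\mu'}y^{\nu'}$ with $\nu'\neq0$. Comparing with a pure-$x$ representation of the same degree under graded reverse lexicographic order, the one that involves the last variables $y$ is smaller. Indeed, $y$-variables come last, so any monomial divisible by $y_d$ beats one not divisible by it, and more generally the last nonzero entry of the difference lies among the $y$-coordinates. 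This needs care, since we compare by the last coordinate where the two differ; it is cleanest to compare the $y$-part of the pure-$x$ monomial, which is $0$, against $\nu'\neq0$. Hence $m_{b''}$ is not a pure $x$-monomial, so $p\ne m_{(a_{j_s}+b)}$ and $p\in\mathscr{N}_1$.

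Finally, if no such prefix exists, then $x^{\mu}=m_c$ with $c\in B_A$ and $w\in\mathscr{M}_1$, a contradiction. Finiteness of $\mathscr{N}_1$ is immediate since $B_A$ is finite.
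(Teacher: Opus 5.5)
Your proof is correct and follows the paper's argument. The inclusion $\supseteq$ rests on exactly the paper's Lemma 3.8: every monomial dividing some $m_{b'}$ with $b'\in B_A$ is itself of the form $m_c$. For $\subseteq$ you extend a good divisor $m_b$ of $x^{\mu}$ by one variable and use the same fact the paper uses, that a pure-$x$ monomial is never $m_{b''}$ for $b''\notin B_A$. The only cosmetic difference is that you take $m_b$ to be the longest good prefix of a fixed factorization of $x^{\mu}$, whereas the paper takes a divisor $m_b\in X_{\mu}$ of maximal degree.
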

We first prove the following lemma:

\begin{lemma}
 Let $b\in B_A$, $m\in\mathscr{M}_S$. If $m_b$ is divisible by $m$, then $m=m_{b^{\prime}}$ for some $b^{\prime}\in B_A$.
 \end{lemma}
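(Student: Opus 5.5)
Since $b\in B_A$, the monomial $m_b$ is a pure $x$-monomial: by the description of $m_b$ for $b\in B_A$ given after Proposition 3.1, $m_b=\min_{\prec}\lbrace x^{\mu}\mid b=\sum_i\mu_{[i]}a_i\rbrace$. Hence any divisor $m$ of $m_b$ has the form $m=x^{\mu'}$ with $\mu'\le\mu$ componentwise, where $m_b=x^{\mu}$. The plan is to set $b':=\sum_{i=1}^c\mu'_{[i]}a_i\in B$ and verify two facts: first that $b'\in B_A$, and second that $m=m_{b'}$.

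For the first fact, write $b=b'+b''$ with $b'':=\sum_i(\mu-\mu')_{[i]}a_i\in B$. By the remark in Subsection 2.3 that $x+y\in B_A$ with $x,y\in B$ implies $x,y\in B_A$, we get $b'\in B_A$. (If one wants this spelled out: if $b'-a\in B$ for some $a\in A\setminus\{0\}$, then $b-a=(b'-a)+b''\in B$, contradicting $b\in B_A$.)

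For the second fact, I would use the multiplicativity of $\pi$ together with the properties of the graded reverse lexicographic order. Suppose $m\neq m_{b'}$. Then $m_{b'}\prec m$ and $\pi(m_{b'})=\pi(m)=t^{b'}$. Multiplying by $x^{\mu-\mu'}$ gives $m_{b'}x^{\mu-\mu'}\prec m\,x^{\mu-\mu'}=m_b$, since a monomial order is compatible with multiplication. Moreover $\pi(m_{b'}x^{\mu-\mu'})=t^{b'+b''}=t^b$, so $m_{b'}x^{\mu-\mu'}\in\pi^{-1}(t^b)$ is strictly smaller than $m_b$. This contradicts the minimality of $m_b$, so $m=m_{b'}$.

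There is no serious obstacle here. The only points needing care are that divisors of $m_b$ involve only the $x$-variables, which holds because $b\in B_A$ forces $m_b$ to contain no $y_i$, and that $\prec$ is a genuine monomial order, so it is compatible with multiplication.
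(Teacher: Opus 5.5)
Your proposal is correct and follows essentially the same route as the paper: write $m=x^{\mu'}$, set $b'=\sum_i\mu'_{[i]}a_i$, note $b'\in B_A$ because $B_A$ is closed under summands, and derive a contradiction with the minimality of $m_b$ by multiplying a smaller representative of $t^{b'}$ by $x^{\mu-\mu'}$. You also spell out two points the paper leaves implicit: why $b'\in B_A$, and that $\prec$ is compatible with multiplication.
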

\begin{proof}
Since $m_b\in\mathscr{M}_{\mathbb{K}[x_1,\ldots, x_c]}$, we can write $m_b/m=x^{\mu}$ and $m=x^{\mu^{\prime}}$ with $\mu, \mu^{\prime}\in\mathbb{Z}_{\geq0}^c$. Set $b^{\prime}=\sum_{i=1}^c\mu^{\prime}_{[i]}a_i$. Then $b=b^{\prime}+\sum_{i=1}^c\mu_{[i]}a_i$, and $b^{\prime}\in B_A$ since $b\in B_A$. Assume $m_{b^{\prime}}\neq x^{\mu^{\prime}}$. Then, there exists $\mu^{\prime\prime}\in\mathbb{Z}_{\geq0}^c$ such that $x^{\mu^{\prime\prime}}\prec x^{\mu^{\prime}}$ and $\sum_{i=1}^c(\mu^{\prime\prime})_{[i]}a_i=b^{\prime}$. Then, we have 
\[x^{\mu^{\prime\prime}}x^{\mu}\prec x^{\mu^{\prime}}x^{\mu}=m_b\quad\text{and}\quad\sum_{i=1}^c(\mu^{\prime\prime}+\mu)_{[i]}a_i=b,\] 
which contradicts the definition of $m_b$. Hence $x^{\mu^{\prime}}=m_{b^{\prime}}$ with $b^{\prime}\in B_A$.
\end{proof}

\begin{proof}[Proof of Proposition 3.7]
Let $n\in\mathscr{N}_1$. Assume that $\mathscr{M}_{nS}\cap\mathscr{M}_1$ is not empty. Then, there exist $\mu\in\mathbb{Z}_{\geq0}^c$, $\nu, \nu^{\prime}\in\mathbb{Z}_{\geq0}^d$ and $b\in B_A$ such that $nx^{\mu}y^{\nu}=m_by^{\nu^{\prime}}$. We have $nx^{\mu}=m_b$, and by Lemma 3.8, $n=m_{b^{\prime}}$ for some $b^{\prime}\in B_A$. This contradicts to the definition of $\mathscr{N}_1$. Thus, $\mathscr{M}_{nS}$ and $\mathscr{M}_1$ are disjoint for every $n\in\mathscr{N}_1$, which implies $\mathscr{M}_S\setminus\mathscr{M}_1\supset\bigcup_{n\in\mathscr{N}_1}\mathscr{M}_{nS}$.

To prove the reverse inclusion, we prove the following claim:
\[x^{\mu}y^{\nu}\notin\bigcup_{n\in\mathscr{N}_1}\mathscr{M}_{nS}\implies x^{\mu}=m_b\text{ for some $b\in B_A$ (then $x^{\mu}y^{\nu}\in\mathscr{M}_1$)}.\]
Let $X_{\mu}$ be the set of monomials,
\[X_{\mu}:=\lbrace m_b\mid b\in B_A\text{ and $m_b$ divides $x^{\mu}$}\rbrace.\] 

If $X_{\mu}$ is empty, $x_i=m_{a_i}$ does not divide $x^{\mu}$ for any $i=1,\ldots, c$, and then $x^{\mu}=1=m_0$, a monomial associated with $0\in B_A$. 

If $X_{\mu}$ is not empty, let $m_b\in X_{\mu}$ be a monomial with the maximal total degree. We will show that $m_b=x^{\mu}$. 

Assume that $m_b\neq x^{\mu}$. Then, $x^{\mu}$ is divisible by $n:=x_im_b$ for some $x_i$, since $m_b$ divides $x^{\mu}$. From the choice of $m_b$, we know that $n\notin X_{\mu}$. If $a_i+b\in B_A$, then $n\neq m_{(a_i+b)}$. If $a_i+b\notin B_A$, we can write $a_i+b=b^{\prime}+a$ with $b^{\prime}\in B$ and $a\in A\setminus\{0\}$. Then, $n-m_{b^{\prime}}m_a\in\ker\pi$. Since $n\in\mathbb{K}[x_1, \ldots, x_c]$ and $m_a\in\mathbb{K}[y_1, \ldots, y_d]$, it follows that $m_{b^{\prime}}m_a\prec n$ and $n\neq m_{(a_i+b)}$. Thus, $n\in\mathscr{N}_1$ and we have $x^{\mu}y^{\nu}\in\mathscr{M}_{nS}$, which contradicts $x^{\mu}y^{\nu}\notin\bigcup_{n\in\mathscr{N}_1}\mathscr{M}_{nS}$. 
\end{proof}

By the proposition, $\mathscr{M}_{\initial_{\prec}(\ker\pi)}=(\bigcup_{n\in\mathscr{N}_1}\mathscr{M}_{nS})\cup(\mathscr{M}_1\setminus\mathscr{M}_B)$. Next, we turn our attention to the set $\mathscr{M}_1\setminus\mathscr{M}_B$.

For an equivalence class $\Gamma=\lbrace b_1,\ldots, b_t\rbrace\subset B_A$ with $b_1\prec\cdots\prec b_t$,
\[\mathscr{N}_{\Gamma}:=\lbrace n(b_i, b_j)\mid 1\leq i<j\leq t\rbrace.\]
If $\#\Gamma=1$, we define $\mathscr{N}_{\Gamma}:=\phi$. Then, 
\[\bigcup_{i=1}^tm_{b_i}\mathscr{M}_T=\mathscr{M}_{\Gamma}\cup(\bigcup_{n\in\mathscr{N}_{\Gamma}}n\mathscr{M}_T).\]
Let $\Gamma_1,\ldots, \Gamma_e$ be all the equivalence classes on $B_A$. We define
\[\mathscr{N}_2:=\bigcup_{i=1}^e\mathscr{N}_{\Gamma_i}.\]
Note that $\mathscr{N}_2$ is a finite set, and $\mathscr{M}_1=\mathscr{M}_B\cup(\bigcup_{n\in\mathscr{N}_2}n\mathscr{M}_T)$.

\begin{theorem}
\[\mathscr{M}_{\initial_{\prec}(\ker\pi)}=\bigcup_{n\in\mathscr{N}_1\cup\mathscr{N}_2}\mathscr{M}_{nS}.\]
Thus, there exist $n_1,\ldots, n_r\in\mathscr{N}_1\cup\mathscr{N}_2$ such that $\initial_{\prec}(\ker\pi)=(n_1,\ldots, n_r)$.
\end{theorem}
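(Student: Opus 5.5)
We have $\mathscr{M}_{\initial_{\prec}(\ker\pi)}=(\mathscr{M}_S\setminus\mathscr{M}_1)\cup(\mathscr{M}_1\setminus\mathscr{M}_B)$. Proposition 3.7 identifies the first piece with $\bigcup_{n\in\mathscr{N}_1}\mathscr{M}_{nS}$. So the theorem reduces to showing
\[
\mathscr{M}_1\setminus\mathscr{M}_B\subset\bigcup_{n\in\mathscr{N}_2}\mathscr{M}_{nS}\subset\mathscr{M}_{\initial_{\prec}(\ker\pi)} .
\]
The finite generation statement then follows at once, since $\mathscr{N}_1\cup\mathscr{N}_2$ is finite.

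\textbf{First inclusion.} By Proposition 3.6 and the identity $\mathscr{M}_1=\mathscr{M}_B\cup(\bigcup_{n\in\mathscr{N}_2}n\mathscr{M}_T)$ recorded just before the theorem, we get
\[
\mathscr{M}_1\setminus\mathscr{M}_B\subset\bigcup_{n\in\mathscr{N}_2}n\mathscr{M}_T\subset\bigcup_{n\in\mathscr{N}_2}\mathscr{M}_{nS}.
\]
I would double-check that identity here. Each $m_{b_k}\mathscr{M}_T$ is the part lying in $\mathscr{M}_\Gamma$ together with the removed pieces $n(b_i,b_k)\mathscr{M}_T$, and each removed piece is contained in $m_{b_k}\mathscr{M}_T$ because $m_{(b_i\vee b_k-b_k)}\in\mathscr{M}_T$. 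Taking the union over all classes gives the identity.

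\textbf{Second inclusion.} This step requires showing that every multiple of $n(b_i,b_j)$ lies in the initial ideal. By Remark 3.3, $n(b_i,b_j)-m_{b_i}m_{(b_i\vee b_j-b_i)}\in\ker\pi$, and the second monomial is strictly smaller. Hence $n(b_i,b_j)=\initial_{\prec}$ of this binomial, so $n(b_i,b_j)\in\initial_{\prec}(\ker\pi)$. Since $\initial_{\prec}(\ker\pi)$ is an ideal, $\mathscr{M}_{nS}\subset\mathscr{M}_{\initial_{\prec}(\ker\pi)}$ for every $n\in\mathscr{N}_2$. Alternatively, one can argue via Proposition 3.2: multiplying a non-$\mathscr{M}_B$ monomial by any monomial $u$ stays outside $\mathscr{M}_B$, because $m\succ m'$ with $\pi(m)=\pi(m')$ implies $um\succ um'$ by monomial-order compatibility.

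\textbf{Conclusion.} Combining the two inclusions with Proposition 3.7 gives the set equality. The monomial ideal $\initial_{\prec}(\ker\pi)$ is generated by its monomials, and by the equality these are exactly the multiples of elements of the finite set $\mathscr{N}_1\cup\mathscr{N}_2$. Taking $n_1,\ldots,n_r$ to be the elements of $\mathscr{N}_1\cup\mathscr{N}_2$ (or a minimal subset) finishes the proof.

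The only delicate point is the first inclusion, because $\mathscr{M}_1\setminus\mathscr{M}_B$ is defined within $\mathscr{M}_1$, whereas $\mathscr{M}_{nS}$ allows arbitrary $x$-multiples. Such multiples are harmless here: we only need them to lie in the initial ideal, which the ideal property provides.
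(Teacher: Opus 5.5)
Your proof is correct and is essentially the paper's argument. The inclusion $\subset$ comes from Proposition 3.7 together with $\mathscr{M}_1=\mathscr{M}_B\cup\bigcup_{n\in\mathscr{N}_2}n\mathscr{M}_T$, and $\supset$ reduces by the ideal property to $\mathscr{N}_1\cup\mathscr{N}_2\subset\initial_{\prec}(\ker\pi)$, where $\mathscr{N}_2$ is handled by the binomials $n(b_i,b_j)-m_{b_i}m_{(b_i\vee b_j-b_i)}$. Your check of the identity for $\mathscr{M}_1$ (via Proposition 3.6 and $n(b_i,b_k)\mathscr{M}_T\subset m_{b_k}\mathscr{M}_T$) usefully spells out a step the paper only states; note that the remark you cite as 3.3 is Remark 3.4 in the paper.
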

\begin{proof}
We have $\mathscr{M}_{\initial_{\prec}(\ker\pi)}=(\bigcup_{n\in\mathscr{N}_1}\mathscr{M}_{nS})\cup(\mathscr{M}_1\setminus\mathscr{M}_B)$, and
\[\mathscr{M}_1\subset\mathscr{M}_B\cup(\bigcup_{n\in\mathscr{N}_2}\mathscr{M}_{nS}).\]
Thus, $\mathscr{M}_{\initial_{\prec}(\ker\pi)}\subset\bigcup_{n\in\mathscr{N}_1\cup\mathscr{N}_2}\mathscr{M}_{nS}$.

To prove the reverse inclusion, it suffices to show $n\in\initial_{\prec}(\ker\pi)$ for any $n\in\mathscr{N}_1\cup\mathscr{N}_2$. We have seen $\mathscr{N}_1\subset\initial_{\prec}(\ker\pi)$. Let $n\in\mathscr{N}_2$. We can write $n=n(b_i, b_j)$ with $b_i\prec b_j$ in an equivalence class $\Gamma\subset B_A$. Then,
\[n(b_i, b_j)-m_{b_i}m_{(b_i\vee b_j-b_i)}\in\ker\pi\quad\text{and}\quad m_{b_i}m_{(b_i\vee b_j-b_i)}\prec n(b_i, b_j),\]
see Remark 3.4. Thus, $n(b_i, b_j)\in\initial_{\prec}(\ker\pi)$.
\end{proof}

\begin{example}
Let $B$ be a simplicial affine monoid with
\[\Hilb(B)=\lbrace\;\underbrace{(11, 1)}_{a_1},\;\underbrace{(9, 3)}_{a_2},\;\underbrace{(4, 8)}_{a_3},\;\underbrace{(1, 11)}_{a_4},\;\underbrace{(12, 0)}_{e_1},\;\underbrace{(0, 12)}_{e_2}\;\rbrace,\]
and $A=\mathbb{Z}_{\geq0}e_1+\mathbb{Z}_{\geq0}e_2=12\cdot\mathbb{Z}_{\geq0}^2$.

A monomial associated with $(2, 22)$ is $m_{(2, 22)}=x_4^2$. Since $a_1+2a_4=(13, 23)$ is not in $B_A$, we have $x_1x_4^2\in\mathscr{N}_1$. Note that $x_1x_4$ is also in $\mathscr{N}_1$, and thus $x_1x_4^2\in\mathscr{N}_1$ is unnecessary for generating $\initial_{\prec}(\ker\pi)$.

To illustrate another example of a monomial in $\mathscr{N}_1$, we consider 
\[(19, 17)=2a_2+a_4=a_1+2a_3\in B_A.\]
A monomial associated with $(19, 17)$ is $m_{(19, 17)}=x_2^2x_4$. On the other hand, $x_1x_3^2\in\mathscr{N}_1$ since $x_3^2=m_{(8, 16)}$ with $(8, 16)\in B_A$.

In $B_A$, there exist an equivalence class 
\[\Gamma=\lbrace\;\underbrace{(18, 6)}_{2a_2},\;\underbrace{(6, 30)}_{a_3+2a_4}\;\rbrace\subset B_A,\]
where $2a_2$ and $a_3+2a_4$ are representations of $(18, 6)$ and $(6, 30)$ respectively, which correspond to $m_{(18, 6)}$, $m_{(6, 30)}$. It is noted that $(18, 6)\prec (6, 30)$. Set $b_1:=(18, 6)$, $b_2:=(6, 30)$. Then, $x_3x_4^2y_1=n(b_1, b_2)\in\mathscr{N}_2$. 
\end{example}
As shown in the example above, $\mathscr{N}:=\mathscr{N}_1\cup\mathscr{N}_2$ contains redundant elements for generating $\initial_{\prec}(\ker\pi)$. If $n^{\prime}\in\mathscr{N}$ is divisible by $n\in\mathscr{N}$, remove $n^{\prime}$. Repeating this process, we obtain a subset $\mathscr{N}_0\subset\mathscr{N}$ such that $n_1\neq n_2$ with $n_1, n_2\in\mathscr{N}_0$ implies that $n_1$ is neither divisible by $n_2$ nor $n_2$ divisible by $n_1$. Then $\mathscr{N}_0$ is a minimal system of generators of $\initial_{\prec}(\ker\pi)$.

\begin{definition}
Let $I$ be an ideal of $S$. A finite subset $G\subset I$ is a \textit{Gr\"obner basis} of $I$ with respect to $\prec$ if $\initial_{\prec}(I)$ is generated by $\lbrace\initial_{\prec}(g)\mid g\in G\rbrace$. 
\end{definition}
 A Gr\"obner basis $G$ with respect to $\prec$ is called \textit{reduced} if for any $g\in G$, no term of $g^{\prime}$ with $g^{\prime}\in G\setminus\lbrace g\rbrace$ is divisible by $\initial_{\prec}(g)$. It is known that the reduced Gr\"obner basis is unique for an ideal and a term order, provided that the coefficient of the initial term of $g$ is $1$ for each $g\in G$.

Let $n=x^{\mu}y^{\nu}\in\mathscr{N}$ with $b=\sum_{i=1}^c\mu_{[i]}a_i+\sum_{i=1}^d\nu_{[i]}e_i$. To describe a Gr\"obner basis of $\ker\pi$, we consider a binomial
\[g_n:=n-m_b\in\ker\pi.\]
 
\begin{proposition}
In the notation above, $G=\lbrace g_n\mid n\in\mathscr{N}_0\rbrace$ is the reduced Gr\"obner basis of $\ker\pi$ with respect to the graded reverse lexicographic order.
\end{proposition}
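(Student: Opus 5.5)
We need to prove that $G=\{g_n\mid n\in\mathscr{N}_0\}$ is the reduced Gr\"obner basis. The plan has three steps, carried out in this order.

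First, each $g_n$ lies in $\ker\pi$ and has initial term $n$. Indeed, $n$ and $m_b$ both lie in $\pi^{-1}(t^b)$, so $g_n\in\ker\pi$. Since $n\in\mathscr{N}\subset\initial_{\prec}(\ker\pi)$ by Theorem 3.9, Proposition 3.1 gives $n\notin\mathscr{M}_B$, hence $n\neq m_b$. By the definition of $m_b$ as the $\prec$-minimum of $\pi^{-1}(t^b)$, we get $m_b\prec n$. Thus $\initial_{\prec}(g_n)=n$ with coefficient $1$.

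Second, $G$ is a Gr\"obner basis. By Theorem 3.9 and the discussion after Example 3.10, $\mathscr{N}_0$ generates $\initial_{\prec}(\ker\pi)$. The initial terms of $G$ are exactly $\mathscr{N}_0$, so $\{\initial_{\prec}(g)\mid g\in G\}$ generates $\initial_{\prec}(\ker\pi)$, which is the definition of a Gr\"obner basis. We should also check that $G$ is finite and that distinct $n$ give distinct $g_n$; both are immediate, since $\mathscr{N}$ is finite.

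Third, reducedness: for $g_n, g_{n'}$ with $n\neq n'$ in $\mathscr{N}_0$, no term of $g_{n'}$ is divisible by $n$. This splits into two cases.
\begin{enumerate}
\item[$\cdot$] The term $n'$: it is not divisible by $n$, by the construction of $\mathscr{N}_0$ as an antichain under divisibility.
\item[$\cdot$] The term $m_{b'}$, where $b'$ is the degree of $n'$: by Proposition 3.1, $m_{b'}\in\mathscr{M}_B=\mathscr{M}_S\setminus\mathscr{M}_{\initial_{\prec}(\ker\pi)}$. If $n$ divided $m_{b'}$, then $m_{b'}$ would lie in $\initial_{\prec}(\ker\pi)$ because $n\in\initial_{\prec}(\ker\pi)$, a contradiction.
\end{enumerate}
Since each $g_n$ also has leading coefficient $1$, uniqueness of the reduced Gr\"obner basis identifies $G$ as the reduced Gr\"obner basis.

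The main point needing care is the first step: we must confirm that $n\neq m_b$, which uses $\mathscr{N}\subset\initial_{\prec}(\ker\pi)$ from the proof of Theorem 3.9. We must also note the degenerate possibility that $g_n=0$, which is ruled out by this same step. Everything else follows formally from Proposition 3.1 and Theorem 3.9.
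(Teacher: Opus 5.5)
Your proof is correct and follows the paper's argument. $G$ is a Gr\"obner basis because its initial terms are exactly $\mathscr{N}_0$, and reducedness follows from the antichain property of $\mathscr{N}_0$ together with $\mathscr{M}_{\initial_{\prec}(\ker\pi)}=\mathscr{M}_S\setminus\mathscr{M}_B$ (Proposition 3.1); your first step, which derives $m_b\prec n$ from $n\in\initial_{\prec}(\ker\pi)$ and Proposition 3.1, spells out what the paper simply asserts ``by definition''.
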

\begin{proof}
By definition, $G$ is a Gr\"obner basis of $\ker\pi$ with respect to $\prec$, and the coefficient of each $n=\initial_{\prec}(g_n)$ is $1$. Let $n\in\mathscr{N}_0$ and $n^{\prime}\in\mathscr{N}_0\setminus\lbrace n\rbrace$. A binomial $g_{n^{\prime}}$ is of the form $n^{\prime}-m$ with $m\in\mathscr{M}_B$. By construction of $\mathscr{N}_0$, $n^{\prime}$ is not divisible by $n$. Moreover, $m$ is not divisible by $n$ since $\mathscr{M}_{\initial_{\prec}(\ker\pi)}=\mathscr{M}_S\setminus\mathscr{M}_B$.
\end{proof}

\begin{remark}
Proposition 3.12 means that a set $\mathscr{N}_0\subset\mathscr{N}$ obtained by removing redundant elements for generating $\initial_{\prec}(\ker\pi)$ is exactly the set of initial terms of the reduced Gr\"obner basis of $\ker\pi$, which is unique for $\ker\pi$.
\end{remark}


\subsection{Example}
Let $B$ be a simplicial affine monoid with
\[\Hilb(B)=\lbrace\;\underbrace{(0, 1, 3)}_{a_1},\;\underbrace{(2, 0, 2)}_{a_2},\;\underbrace{(3, 1, 0)}_{a_3},\;\underbrace{(4, 0, 0)}_{e_1},\;\underbrace{(0, 4, 0)}_{e_2},\;\underbrace{(0, 0, 4)}_{e_3}\;\rbrace.\]
To obtain $\mathscr{N}_0$, we list the representations of elements $b\in B_A$ corresponding to $m_b$, where the monomials $m_b$ are ordered increasingly with respect to $\prec$. 

The set of elements in $B_A$ of degree $t$ is denoted by $(B_A)_t$. It is noted that every element in $(B_A)_{t+1}$ is of the form $a_i+b$ with $b\in(B_A)_t$. First, $0\in B_A$ and the elements in $(B_A)_1$ are listed as follows: 
\[0,\;a_3,\;a_2,\;a_1.\]
To list the elements in $(B_A)_2$, we examine the list 
\[a_3+a_3,\;a_2+a_3,\;a_1+a_3,\;a_2+a_2,\;a_1+a_2,\;a_1+a_1,\]
in the order, and check whether the corresponding monomial of each representation equals to $m_b$ with $b\in B_A$: For example,
\begin{enumerate}
\item[$\cdot$] $2a_3=(6, 2, 0)\in B_A$ and $x^{2a_3}:=x_3^2=m_{(6, 2, 0)}$.
\item[$\cdot$] $2a_2\notin B_A$. Note that $x_2^2\in\mathscr{N}_1$ since $x^{2a_2}:=x_2^2\neq m_{2a_2}$.
\end{enumerate}
The representations satisfying the condition are collected in their original order, yielding the list of $(B_A)_2$:
\[2a_3,\;a_2+a_3,\;a_1+a_3,\;a_1+a_2,\;2a_1.\]
The representations that fail to satisfy the condition are collected as well, and the resulting set denoted by $(N_1^{\prime})_2$: $(N_1^{\prime})_2=\lbrace 2a_2\rbrace$.

To list the elements in $(B_A)_3$, we examine the list  
\[a_3+(2a_3),\;a_2+(2a_3),\;a_1+(2a_3),\;a_2+(a_2+a_3),\;a_1+(a_2+a_3),\] 
\[a_1+(a_1+a_3),\;a_1+(a_1+a_2),\;a_1+(2a_1),\]
and check whether each representation satisfies the condition. Then, the elements in $(B_A)_3$ are listed as follows:
\[3a_3,\;a_2+2a_3,\;a_1+2a_3,\;a_1+a_2+a_3,\;2a_1+a_3,\;2a_1+a_2,\;3a_1,\]
 and $(N_1^{\prime})_3=\lbrace 2a_2+a_3\rbrace$. Note that $x^{(2a_2+a_3)}:=x_2^2x_3\in\mathscr{N}_1$.

To describe this procedure in general, we formalize the operation that produces the list of $(B_A)_{t+1}$ from that of $(B_A)_t$ as follows: 

\vspace{0.5em}
\noindent\emph{\textbf{Procedure.}}
Let $\sigma_1,\ldots,\sigma_r$ be the ordered list of $(B_A)_t$. Let $i_\sigma$ denotes the smallest index of $a_i$ that appears in a representation $\sigma\in\lbrace\sigma_1,\ldots,\sigma_r\rbrace$. Then, we examine the following elements, in this order, 
\[a_{i_{\sigma_1}}+\sigma_1,\;a_{(i_{\sigma_1}-1)}+\sigma_1,\;\ldots,\;a_1+\sigma_1,\]
\[a_{i_{\sigma_2}}+\sigma_2,\;a_{(i_{\sigma_2}-1)}+\sigma_2,\;\ldots,\;a_1+\sigma_2,\]
\[\ldots,\]
\[a_{i_{\sigma_r}}+\sigma_r,\;a_{(i_{\sigma_r}-1)}+\sigma_r, \;\ldots,\; a_1+\sigma_r,\]
and check whether each representation $\tau$ satisfies the following two conditions:
\begin{enumerate}
\item The value of $\tau$ does not coincide with the value of any representation that appears before $\tau$.
\item The value of $\tau$ belongs to $B_A$.
\end{enumerate}
The representations satisfying the conditions are collected in their original order, yielding the list of $(B_A)_{t+1}$. Simultaneously, the representations that fail to satisfy the conditions are collected, and let $(N_1^{\prime})_{t+1}$ be the resulting set.  

\vspace{0.5em}
\noindent\emph{\textbf{Justification of Procedure.}} 
First, we observe that if $\tau$ does not satisfy both of the above conditions, then the corresponding monomial does not equal to $m_b$ for any $b\in B_A$. If $\tau$ does not satisfy Condition 2, it is obvious. If $\tau$ does not satisfy Condition 1 but does satisfy Condition 2, there is a representation $\tau^{\prime}$ appearing before $\tau$ such that $\tau=\tau^{\prime}$. It implies that $x^{\tau}\notin\mathscr{M}_B$ and $x^{\tau}\neq m_{b}$. 

Conversely, the above two conditions imply that the corresponding monomial of $\tau$ equals to $m_b$ for some $b\in B_A$.

Next, we observe that, to obtain the list of $(B_A)_{t+1}$, there is no need to check whether a representation of the form $a_i+\sigma$ with $i>i_{\sigma}$ satisfies the conditions. Let $\sigma^{\prime}=\sigma-a_{i_{\sigma}}+a_i$, then $a_i+\sigma=a_{i_{\sigma}}+\sigma^{\prime}$ and $x^{\sigma^{\prime}}\prec x^{\sigma}$. It implies that the representation $a_i+\sigma$ has already been examined as $a_{i_{\sigma}}+\sigma^{\prime}$ if $\sigma^{\prime}$ is in the list of $(B_A)_t$ ; it is noted that if $\sigma^{\prime}$ is not in the list of $(B_A)_t$, then $a_i+\sigma$ is not in the list of $(B_A)_{t+1}$, since by Lemma 3.8, $x_ix^{\sigma}=x_{i_{\sigma}}x^{\sigma^{\prime}}\neq m_{b}$ for any $b\in B_A$.

\begin{remark}
In Procedure, rather than merely checking whether $\sigma$ belongs to $B_A$, we check whether the corresponding monomial $x^{\sigma}$ coincides with $m_b$ for some element $b\in B_A$. This prevents redundant enumeration of elements in $B_A$, and more importantly, allows us to distinguish representations corresponding to monomials in $\mathscr{N}_1$ from those corresponding to $m_b$ with $b\in B_A$ (for example, $a_1+2a_3$ and $2a_2+a_4$ in Example 3.10). 
\end{remark}

Returning to the computation, the elements in $(B_A)_4$ are listed as follows: 
\[a_2+3a_3,\;a_1+3a_3,\;a_1+a_2+2a_3,\;2a_1+a_2+a_3,\;3a_1+a_3,\;3a_1+a_2.\]
Simultaneously, we collect the representations not satisfying the condition: 
\[(N_1^{\prime})_4=\lbrace\;4a_3,\;2a_2+2a_3,\;2a_1+2a_3,\;4a_1\;\rbrace.\] 
The corresponding monomials $x_3^4$, $x_2^2x_3^2$, $x_1^2x_3^2$, $x_1^4$ are in $\mathscr{N}_1$. The elements in $(B_A)_5$ are listed as follows:
\[a_1+a_2+3a_3,\;3a_1+a_2+a_3,\]
and the representations not satisfying the condition are
\[(N_1^{\prime})_5=\lbrace\;2a_2+3a_3,\;2a_1+3a_3,\;2a_1+a_2+2a_3,\;4a_1+a_3\;\rbrace.\]
Thus, $x_2^2x_3^3$, $x_1^2x_3^3$, $x_1^2x_2x_3^2$, $x_1^4x_3$ are in $\mathscr{N}_1$. This completes the list of $B_A$ because there is no element in $(B_A)_6$:
\[(N_1^{\prime})_6=\lbrace\;2a_1+a_2+3a_3,\;4a_1+a_2+a_3\;\rbrace\quad\text{and}\quad x_1^2x_2x_3^3,\;x_1^4x_2x_3\in\mathscr{N}_1.\]
Summarizing the above computation, the elements of $B_A$ are listed:
\begin{enumerate}
\item[$(B_A)_0$:] $0$.
\item[$(B_A)_1$:] $a_3,\;a_2,\;a_1$.
\item[$(B_A)_2$:] $2a_3,\;a_2+a_3,\;a_1+a_3,\;a_1+a_2,\;2a_1$.
\item[$(B_A)_3$:] $3a_3,\;a_2+2a_3,\;a_1+2a_3,\;a_1+a_2+a_3,\;2a_1+a_3,\;2a_1+a_2,\;3a_1$.
\item[$(B_A)_4$:] $a_2+3a_3,\;a_1+3a_3,\;a_1+a_2+2a_3,\;2a_1+a_2+a_3,\;3a_1+a_3,\;3a_1+a_2$.
\item[$(B_A)_5$:] $a_1+a_2+3a_3,\;3a_1+a_2+a_3$.
\end{enumerate}
Simultaneously, we find the following monomials in $\mathscr{N}_1$:
\[
\mathscr{N}_1^{\prime}:=\left\{
\begin{aligned}
&\;x_2^2,\:x_2^2x_3,\;x_3^4,\;x_2^2x_3^2,\;x_1^2x_3^2,\;x_1^4,\\ 
&\;x_2^2x_3^3,\;x_1^2x_3^3,\;x_1^2x_2x_3^2,\;x_1^4x_3,\;x_1^2x_2x_3^3,\;x_1^4x_2x_3\;
\end{aligned}
\right\}.
\]
They are the corresponding monomials of the representations in $\bigcup_{t=2}^6(N_1^{\prime})_t$.
In Remark 3.15, we will observe that $\mathscr{N}_0\cap\mathscr{N}_1\subset\mathscr{N}_1^{\prime}$.

The monomials in $\mathscr{N}_2$ are obtained from the equivalence classes in $B_A$ that contain more than two elements. The followings are all the equivalence classes in $B_A$ containing more than two elements: 
\begin{enumerate}
\item[$\cdot$] $\Gamma_1:=\lbrace\;\underbrace{(11, 3, 2)}_{a_2+3a_3},\;\underbrace{(3, 3, 6)}_{2a_1+a_3}\;\rbrace$ with $(11, 3, 2)\prec(3, 3, 6)$.
\item[$\cdot$] $\Gamma_2:=\lbrace\;\underbrace{(11, 4, 5)}_{a_1+a_2+3a_3},\;\underbrace{(3, 4, 9)}_{3a_1+a_3}\;\rbrace$ with $(11, 4, 5)\prec (3, 4, 9)$.
\item[$\cdot$] $\Gamma_3:=\lbrace\;\underbrace{(8, 3, 5)}_{a_1+a_2+2a_3},\;\underbrace{(0, 3, 9)}_{3a_1}\;\rbrace$ with $(8, 3, 5)\prec (0, 3, 9)$.
\item[$\cdot$] $\Gamma_4:=\lbrace\;\underbrace{(8, 2, 2)}_{a_2+2a_3},\;\underbrace{(0, 2, 6)}_{2a_1}\;\rbrace$ with $(8, 2, 2)\prec (0, 2, 6)$.
\item[$\cdot$] $\Gamma_5:=\lbrace\;\underbrace{(9, 3, 0)}_{3a_3},\;\underbrace{(5, 3, 8)}_{2a_1+a_2+a_3}\;\rbrace$ with $(9, 3, 0)\prec (5, 3, 8)$.
\item[$\cdot$] $\Gamma_6:=\lbrace\;\underbrace{(9, 4, 3)}_{a_1+3a_3},\;\underbrace{(5, 4, 11)}_{3a_1+a_2+a_3}\;\rbrace$ with $(9, 4, 3)\prec (5, 4, 11)$.
\item[$\cdot$] $\Gamma_7:=\lbrace\;\underbrace{(6, 3, 3)}_{a_1+2a_3},\;\underbrace{(2, 3, 11)}_{3a_1+a_2}\;\rbrace$ with $(11, 4, 5)\prec (3, 4, 9)$.
\item[$\cdot$] $\Gamma_8:=\lbrace\;\underbrace{(6, 2, 0)}_{2a_3},\;\underbrace{(2, 2, 8)}_{2a_1+a_2}\;\rbrace$ with $(6, 2, 0)\prec (2, 2, 8)$.
\end{enumerate}
Then, 
\[\mathscr{N}_2=\lbrace\;x_1^2x_3y_1^2,\;x_1^3x_3y_1^2,\;x_1^3y_1^2,\;x_1^2y_1^2,\;x_1^2x_2x_3y_1,\;x_1^3x_2x_3y_1,\;x_1^3x_2y_1^2,\;x_1^2x_2y_1\;\rbrace.\]
Removing redundant elements for generating $\initial_{\prec}(\ker\pi)$ from $\mathscr{N}_1^{\prime}\cup\mathscr{N}_2$, 
\[\mathscr{N}_0=\lbrace\;x_2^2,\;x_3^4,\;x_1^2x_3^2,\;x_1^4,\;x_1^2y_1^2,\;x_1^2x_2y_1\;\rbrace.\]
Therefore,
\[\initial_{\prec}(\ker\pi)=(\;x_2^2,\;x_3^4,\;x_1^2x_3^2,\;x_1^4,\;x_1^2y_1^2,\;x_1^2x_2y_1\;),\]
and the reduced Gr\"obner basis of $\ker\pi$ is 
\[\left\{
\begin{aligned}
&\;x_2^2-y_1y_2,\;x_3^4-y_1^3y_2,\;x_1^2x_3^2-x_2y_1y_2y_3,\;\\ 
&\;x_1^4-y_2y_3^3,\;x_1^2y_1^2-x_2x_3^2y_3,\;x_1^2x_2y_1-x_3^2y_3^2\;
\end{aligned}
\right\}.\]

\begin{remark}
The subset $\mathscr{N}_1^{\prime}$ of $\mathscr{N}_1$ in the example above is, by construction, described as follows:
\[\mathscr{N}_1^{\prime}=\lbrace\;n\in\mathscr{N}_1\mid {}^{\exists}x_i, {}^{\exists}b\in B_A\text{ s.t. }n=x_im_b\text{ with }m_b\in(x_i,\ldots, x_c)\;\rbrace.\]
Then, $\mathscr{N}_1\cap\mathscr{N}_0\subset\mathscr{N}_1^{\prime}$.
\end{remark}
\begin{proof}
Let $n\in\mathscr{N}_1$ and $i:=\min\lbrace j\mid\text{$n$ is divisible by $x_j$}\rbrace$. Assume $n\notin\mathscr{N}_1^{\prime}$. Then, $n/x_i\neq m_{b}$ for any $b\in B_A$, and there exist $j>i$ and $b\in B_A$ such that $n=x_jm_b$. Then, $m_b$ is divisible by $x_i$. By Lemma 3.8, $n/x_i=x_jm_{(b-a_i)}$. Thus $n/x_i\in\mathscr{N}_1$ and $n\notin\mathscr{N}_0$.
\end{proof}

\section{\large Degree bounds}
\begin{definition}
The \textit{Castelnuovo-Mumford regularity}, or simply \textit{regularity}, of a graded complex of free $S$-modules
\[\mathbf{F}:\;\cdots\to F_i\to F_{i-1}\to\cdots,\quad\text{with $F_i=\oplus_jS(-a_{i, j}),\;a_{i, j}\in\mathbb{Z}_{\geq0}$},\]
is the supremum of the numbers $a_{i, j}-i$. The regularity of a finitely generated graded $S$-module $M$, denoted by $\reg M$, is the regularity of a minimal graded free resolution of $M$. 
\end{definition}

It is noted that each finitely generated graded $S$-module has a minimal free resolution, which is unique up to graded isomorphism of complexes (see \cite[Theorem 1.6]{Eis05}). 

\begin{remark}
Let $\mathbf{F}$ be a minimal free resolution of $\ker\pi$. 
\[\mathbf{G}:\;\cdots\to G_{i+1}:=F_i\to\cdots\to G_1:=F_0\to G_0:=S,\]
is a minimal free resolution of $S/\ker\pi$. Then, $\reg\mathbb{K}[B]+1=\reg(\ker\pi)$.
\end{remark}

\begin{definition}
We define
\[r(\mathbb{K}[B]):=\max\{\deg b\mid b\in B_A\}.\]
The number is called the \textit{reduction number} of $\mathbb{K}[B]$.
\end{definition}

In general, $r(\mathbb{K}[B])\leq\reg\mathbb{K}[B]$ (see \cite[Section 2]{Nit}). Thus, 
\[r(\mathbb{K}[B])+1\leq\reg\mathbb{K}[B]+1=\reg(\ker\pi).\]

\begin{proposition}
$\mathscr{N}_1$ consists of monomials of degree at most $r(\mathbb{K}[B])+1$.
\end{proposition}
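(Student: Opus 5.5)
This is essentially immediate from the definitions, so the plan is a direct degree count. By definition, every $n\in\mathscr{N}_1$ has the form $n=x_im_b$ for some variable $x_i$ and some $b\in B_A$ (with $n\neq m_{(a_i+b)}$). So $\deg n = 1+\deg m_b$, and it suffices to show $\deg m_b=\deg b$ for $b\in B_A$.

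For this, note that $m_b\in\pi^{-1}(t^b)$. The map $\pi$ is degree-preserving, with every variable of degree $1$ and every Hilbert basis element of degree $1$ under $\gamma=\deg$. Hence every monomial in $\pi^{-1}(t^b)$ has total degree $\deg b$, as already observed when $m_b$ was defined. Concretely, if $b=\sum\mu_{[i]}a_i$ then $\deg b=\frac1\alpha\sum_j b_{[j]}=\sum\mu_{[i]}=\deg x^{\mu}$, since each $a_i$ has coordinate sum $\alpha$.

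Then, by the definition $r(\mathbb{K}[B])=\max\{\deg b\mid b\in B_A\}$, we get $\deg b\leq r(\mathbb{K}[B])$. This gives $\deg n=1+\deg b\leq r(\mathbb{K}[B])+1$.

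There is no real obstacle. The only point to check is that $B_A$ is finite, which was noted in Subsection 2.3, so that $r(\mathbb{K}[B])$ is a finite maximum. The condition $n\neq m_{(a_i+b)}$ plays no role in the bound.
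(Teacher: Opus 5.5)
Your proof is correct and is the same argument as the paper's. The paper simply reads off $\deg n\leq r(\mathbb{K}[B])+1$ from the form $n=x_im_b$ with $b\in B_A$; you additionally spell out the step it leaves implicit, namely $\deg m_b=\deg b$ because $\pi$ is degree-preserving.
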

\begin{proof}
In Section 3, we defined
\[\mathscr{N}_1:=\lbrace\text{$n\mid {}^{\exists}x_i$, ${}^{\exists}b\in B_A$ s.t. $n=x_im_b$ and $n\neq m_{(a_i+b)}$}\rbrace.\]
Then, $\deg n\leq r(\mathbb{K}[B])+1$ for every $n\in\mathscr{N}_1$. 
\end{proof}

Proposition 4.4 means that $\mathscr{N}_1$ consists of monomials of degree at most $\reg(\ker\pi)$. 
The following example shows that $\mathscr{N}_2$ may contain monomials of degree greater than $\reg(\ker\pi)$.

\begin{example}
We now return to Example 3.10: Let $B$ be a simplicial affine monoid with
\[\Hilb(B)=\lbrace\;\underbrace{(11, 1)}_{a_1},\;\underbrace{(9, 3)}_{a_2},\;\underbrace{(4, 8)}_{a_3},\;\underbrace{(1, 11)}_{a_4},\;\underbrace{(12, 0)}_{e_1},\;\underbrace{(0, 12)}_{e_2}\;\rbrace.\]
In $B_A$, there exists an equivalence class 
\[\Gamma=\lbrace\;\underbrace{(31, 5)}_{2a_1+a_2},\;\underbrace{(19, 17)}_{2a_2+a_4}\;\underbrace{(7, 41)}_{a_3+3a_4}\;\rbrace,\]
with $(31, 5)\prec(19, 17)\prec(7, 41)$. Then, we have $x_3x_4^3y_1^2\in\mathscr{N}_2$.

Using MonomialAlgebras\cite{MA} via the command \texttt{regularityMA}, we know that $\reg\mathbb{K}[B]=4$ and $\reg(\ker\pi)=5$. 

It is noted that $x_3x_4^3y_1^2\notin\mathscr{N}_0$, that is, $x_3x_4^3y_1^2$ is redundant for generating $\initial_{\prec}(\ker\pi)$ since $x_3x_4^2y_1\in\mathscr{N}_2$ (see Example 3.10).
\end{example}

In Subsection 2.3, we obtained an isomorphism of $\mathbb{Z}^d$-graded $T$-modules,
\begin{equation}
\mathbb{K}[B]\simeq\oplus_{i=1}^e\tilde{I_i}(-h_i) \tag{4.1}
\end{equation}
where $\tilde{I_i}:=(y^{\frac{x-h_i}{\alpha}}\mid x\in\Gamma_i)\subset T:=\mathbb{K}[y_1,\ldots, y_d]$. 

\begin{theorem}
If, for every ideal $\tilde{I_i}$ in the decomposition (4.1) of $\mathbb{K}[B]$, either $\tilde{I_i}=T$ or $\tilde{I_i}$ is generated by monomials of degree $1$, then $\mathscr{N}_1\cup\mathscr{N}_2$ consists of monomials of degree at most $r(\mathbb{K}[B])+1$. In particular, $\initial_{\prec}(\ker\pi)$ is generated by elements of degree at most $r(\mathbb{K}[B])+1$.
\end{theorem}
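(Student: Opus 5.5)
By Proposition 4.4, $\mathscr{N}_1$ already consists of monomials of degree at most $r(\mathbb{K}[B])+1$. So it remains to bound the degrees of the elements of $\mathscr{N}_2=\bigcup_i\mathscr{N}_{\Gamma_i}$. The final assertion then follows from Theorem 3.9, since $\initial_{\prec}(\ker\pi)$ is generated by $\mathscr{N}_1\cup\mathscr{N}_2$.

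First I would compute the degree of a generic element of $\mathscr{N}_2$. Fix an equivalence class $\Gamma=\Gamma_i=\{b_1,\ldots,b_t\}$ with $b_1\prec\cdots\prec b_t$, and take $n=n(b_k,b_l)=m_{b_l}m_{(b_k\vee b_l-b_l)}$ with $k<l$. Since $\pi(n)=t^{b_k\vee b_l}$ and $\pi$ preserves degree,
\[\deg n=\deg(b_k\vee b_l)=\tfrac{1}{\alpha}\textstyle\sum_{j}(b_k\vee b_l)_{[j]}.\]
Here I use that $b_k\vee b_l\in b_l+A\subset B$.

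Next I would translate the hypothesis into a statement about $\Gamma$. By Remark 2.9 the generators $y^{(b-h_i)/\alpha}$ with $b\in\Gamma_i$ form a minimal generating set of $\tilde{I_i}$, and $\#\Gamma_i$ equals its number of minimal generators. If $\tilde{I_i}=T$, the minimal generating set is $\{1\}$, so $\#\Gamma_i=1$ and $\mathscr{N}_{\Gamma_i}=\emptyset$; there is nothing to check. Otherwise every $b\in\Gamma_i$ satisfies $b=h_i+e_{s(b)}$ for some index $s(b)$. Minimality forces distinct elements to give distinct generators, so $b\mapsto s(b)$ is injective on $\Gamma_i$.

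Now the degree bound. For $k<l$ write $b_k=h+e_s$ and $b_l=h+e_{s'}$ with $h=h_i$ and $s\neq s'$. Since $e_s=\alpha\varepsilon_s$ has disjoint support from $e_{s'}$, we get $b_k\vee b_l=h+e_s+e_{s'}$. The sum of the entries of $h$ is $\sum_j (b_l)_{[j]}-\alpha$, so
\[\deg(b_k\vee b_l)=\tfrac{1}{\alpha}\Bigl(\textstyle\sum_j (b_l)_{[j]}+\alpha\Bigr)=\deg b_l+1\leq r(\mathbb{K}[B])+1,\]
because $b_l\in B_A$ and $r(\mathbb{K}[B])=\max\{\deg b\mid b\in B_A\}$. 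This bounds every element of $\mathscr{N}_2$.

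The only delicate point is the identification in the second step. One must check that "generated by monomials of degree $1$" means every listed generator $y^{(b-h_i)/\alpha}$, $b\in\Gamma_i$, is a single variable, and not merely that some generating set consists of variables. This follows from Remark 2.9: a monomial ideal has a unique minimal monomial generating set, and that set is exactly $\{y^{(b-h_i)/\alpha}\mid b\in\Gamma_i\}$. With that settled, the rest is the computation above.
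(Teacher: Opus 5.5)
Your proof is correct and follows the paper's argument. Writing $b_k=h+e_s$ and $b_l=h+e_{s'}$ with $s\neq s'$ gives $b_k\vee b_l-b_l=e_s$, so $n(b_k,b_l)=m_{b_l}y_s$ has degree $\deg b_l+1\leq r(\mathbb{K}[B])+1$, which is exactly the paper's computation. Your appeal to Remark 2.9 justifies something the paper only asserts: that each $b-h_i$ with $b\in\Gamma_i$ equals some $e_s$, and that $\#\Gamma_i=1$ when $\tilde{I_i}=T$.
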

\begin{proof}
Let $\Gamma=\{b_1,\ldots, b_t\}$ with $b_1\prec\cdots\prec b_t$ be an equivalence class in $B_A$. Let $\tilde{I}$ be an ideal in (4.1) corresponding to $\Gamma$ and $h:=(\min\lbrace b_{[i]}\mid b\in\Gamma\rbrace)_{i=1,\ldots, d}\;$. Then, $\tilde{I}=(y^{\frac{b_1-h}{\alpha}},\ldots,\;y^{\frac{b_t-h}{\alpha}})$. Assume that $\tilde{I}$ is generated by monomials of degree $1$, that is, $b_i-h\in\{e_1,\ldots, e_d\}$ for every $i=1,\ldots, t$. Thus, for $1\leq i<j\leq t$,
\[b_i=h+e_k,\;b_j=h+e_l\quad\text{for some $1\leq k<l\leq d$}.\]
Then, $n(b_i, b_j)=m_{b_j}y_k$ and $\deg n(b_i, b_j)\leq r(\mathbb{K}[B])+1$. 
\end{proof}

\begin{corollary}
If $\mathbb{K}[B]$ is Buchsbaum (in particular, Cohen-Macaulay), then $\initial_{\prec}(\ker\pi)$ is generated by elements of degree at most $r(\mathbb{K}[B])+1$.
\end{corollary}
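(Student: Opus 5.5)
The corollary should follow directly from Theorem 4.6 combined with the structural result on Buchsbaum simplicial semigroup rings quoted in the introduction, namely \cite[Proposition 3.1]{BEN}. The plan is to verify that the hypothesis of Theorem 4.6 holds whenever $\mathbb{K}[B]$ is Buchsbaum, and then read off the conclusion.

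\textbf{Step 1: the ideals $\tilde{I_i}$ have the required form.} Assume $\mathbb{K}[B]$ is Buchsbaum. By \cite[Proposition 3.1]{BEN}, in the decomposition (4.1) each ideal $\tilde{I_i}\subset T$ is either all of $T$ or is generated by variables $y_k$, i.e. by monomials of degree $1$. This is the hypothesis of Theorem 4.6.

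In the Cohen--Macaulay case one can see this directly. $\mathbb{K}[B]$ is then a free $\mathbb{K}[A]$-module, because $y_1,\ldots,y_d$ form a system of parameters. Each summand $\tilde{I_i}(-h_i)$ of a free module is projective and graded, hence free of rank $1$, hence principal. By Remark 2.9 the number of minimal generators of $\tilde{I_i}$ is $\#\Gamma_i$, so $\#\Gamma_i=1$. Then $b-h_i=0$ for the unique $b\in\Gamma_i$, and so $\tilde{I_i}=T$.

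\textbf{Step 2: apply Theorem 4.6.} Theorem 4.6 now gives that every element of $\mathscr{N}_1\cup\mathscr{N}_2$ has degree at most $r(\mathbb{K}[B])+1$. By Theorem 3.9, $\initial_{\prec}(\ker\pi)$ is generated by finitely many elements of $\mathscr{N}_1\cup\mathscr{N}_2$, which gives the asserted degree bound.

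\textbf{Main obstacle.} The only nontrivial input is the translation of the Buchsbaum property into the shape of the $\tilde{I_i}$, and I take this from \cite[Proposition 3.1]{BEN} as cited in the introduction. If it had to be proved from scratch, the natural route would be local cohomology. The Buchsbaum condition forces $H^i_{\mathfrak m}(\tilde{I_i})$ to be annihilated by $\mathfrak m$ for $i<d$, and $T/\tilde{I_i}$ has local cohomology governed by $\tilde{I_i}$. A monomial ideal $J\subset T$ whose quotient $T/J$ has all lower local cohomology killed by $\mathfrak m$ must be $T$ or generated by a subset of the variables. Checking that last classification is where I would expect the real work to lie.
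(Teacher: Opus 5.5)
Your proposal is correct and follows the paper's own proof: the paper also gets the hypothesis of Theorem 4.6 from \cite[Proposition 3.1(4)]{BEN}, which in fact says more sharply that each $\tilde{I_i}$ is either $T$ or $T_+=(y_1,\ldots,y_d)$, and then reads off the bound. Your direct freeness argument for the Cohen--Macaulay case is a valid substitute for the paper's appeal to \cite[Proposition 3.1(2)]{BEN}. In your optional local-cohomology sketch, the precise conclusion should be $\tilde{I_i}\in\{T,T_+\}$: the choice of $h_i$ makes the generators of $\tilde{I_i}$ have no common factor, and for example $(y_1,y_2)\subset\mathbb{K}[y_1,y_2,y_3]$ is not quasi-Buchsbaum. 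That remark is not needed for the corollary.
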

\begin{proof}
It follows from the characterization in terms of the decomposition (4.1): $\mathbb{K}[B]$ is Buchsbaum if and only if either $\tilde{I_i}=T$ or $\tilde{I_i}=T_+$ and $h_i+b\in B$ for all $b\in\Hilb(B)$ \cite[Proposition 3.1(4)]{BEN}. It is also noted that $\mathbb{K}[B]$ is Cohen-Macaulay if and only if $\tilde{I_i}=T$ \cite[Proposition 3.1(2)]{BEN}.
\end{proof}

\begin{example}
Let $B$ be a simplicial affine monoid with
\[\Hilb(B)=\left\{
\begin{aligned}
\;&\underbrace{(2, 0, 1)}_{a_1},\;\underbrace{(1, 2, 0)}_{a_2},\;\underbrace{(1, 1, 1)}_{a_3},\;\underbrace{(1, 0, 2)}_{a_4},\;\underbrace{(0, 2, 1)}_{a_5},\;\underbrace{(0, 1, 2)}_{a_6},\;\\
&\underbrace{(3, 0, 0)}_{e_1},\;\underbrace{(0, 3, 0)}_{e_2}\;, \underbrace{(0, 0, 3)}_{e_3}
\end{aligned}
\right\}.\]
In \cite[Remark 2.2]{HHS}, it is shown that $y_3, y_2, y_1$ is not a generic sequence of $S/\ker\pi$ in the sense of \cite[Definition 1.5]{BaSt}. In other words, in this case, the result of Bayer and Stillman\cite[Corollary 2.5]{BaSt} does not allow us to determine whether $\initial_{\prec}(\ker\pi)$ is generated in degrees at most $\reg(\ker\pi)$.

On the other hand, the decomposition (4.1) of $\mathbb{K}[B]$ is as follows: 
\[\mathbb{K}[B]\simeq (y_2, y_3)\oplus T^{\oplus 8}\quad\text{(degree shifts omitted)}.\]
Hence, by Theorem 4.6, $\initial_{\prec}(\ker\pi)$ is generated in degrees at most $r(\mathbb{K}[B])+1$.
\end{example}

\section*{Acknowledgments}
The author would like to thank Professor Yasunari Nagai for clear guidance on how to proceed with this research and for many helpful discussions.

\end{document}